\newtheorem{thm}{Theorem}
\newtheorem{prop}{Proposition}
\newtheorem{conj}{Conjecture}
\newtheorem{cor}{Corollary}
\begin{document}
%\conferenceinfo{ISSAC 2018,}{16--19 July 2018, New York, USA} pour arxiv
%\CopyrightYear{2018} 
\crdata{}

\title{Symbolic integration of hyperexponential 1-forms}
\newfont{\authfntsmall}{phvr at 11pt}
\newfont{\eaddfntsmall}{phvr at 9pt}

\numberofauthors{3} 
\author{
\alignauthor
Thierry Combot\\
       \affaddr{Univ. de Bourgogne (France)}\\
       \email{$\!\!\!\!\!\!\!\!\!\!\!\!$thierry.combot@u-bourgogne.fr$\!\!\!\!\!\!\!\!\!\!\!\!$}
}
\date{today}

\maketitle

\begin{abstract} 
Let $H$ be a hyperexponential function in $n$ variables $x=(x_1,\dots,x_n)$ with coefficients in a field $\mathbb{K}$, $[\mathbb{K}:\mathbb{Q}] <\infty$, and $\omega$ a rational differential $1$-form. Assume that $H\omega$ is closed and $H$ transcendental. We prove using Schanuel conjecture that there exist a univariate function $f$ and multivariate rational functions $F,R$ such that $\int H\omega= f(F(x))+H(x)R(x)$. We present an algorithm to compute this decomposition. This allows us to present an algorithm to construct a basis of the cohomology of differential $1$-forms with coefficients in $H\mathbb{K}[x,1/(SD)]$ for a given $H$, $D$ being the denominator of $dH/H$ and $S\in\mathbb{K}[x]$ a square free polynomial. As an application, we generalize a result of Singer on differential equations on the plane: whenever it admits a Liouvillian first integral $I$ but no Darbouxian first integral, our algorithm gives a rational variable change linearising the system.
\end{abstract}

\bigskip
\vspace{1mm}
\noindent
{\bf Categories and Subject Descriptors: 68W30} \\

\vspace{1mm}
\noindent {\bf Keywords: Hermite Reduction, Function Decomposition, Symbolic Integration}

\section{Introduction}

Let us note  $\mathbb{K}(x)_n$ the set of $1$-forms with coefficients in $\mathbb{K}(x)$, where $\mathbb{K}$ is a finite extension of $\mathbb{Q}$. A closed $1$-form $\omega\in \mathbb{K}(x)_n$ defines after integration, up to addition of a constant, a $n$ variables (possibly multivalued) function. As $\omega$ has rational coefficients, its integral can be written as a linear combination of rational and logs of rational functions. Thanks to that, the hyperexponential functions, whose logarithmic differential is a closed rational $1$-form, are not so mysterious, being the exponential of such functions.

Things become more difficult whenever we want to iterate the process. Liouvillian functions are built by successive integrations, exponentiations and algebraic extensions. So one of the natural next steps is to study closed $1$-forms with hyperexponential functions coefficients. Acting the Galois group on the hyperexponential functions, it is always possible to write the $1$-form as a sum of closed $1$-forms in $H\mathbb{K}(x)_n$ for several hyperexponential functions $H$. Thus this comes down to studying the symbolic integration of closed differential forms in $H\mathbb{K}(x)_n$. One of the motivations is the following result

\begin{prop}[Singer \cite{Singer}]
Let $V$ be a rational vector field in the plane. If $V$ admits a Liouvillian first integral, then it admits a first integral of the form
$$\int e^{\int F(x,y)dx+G(x,y)dy} (V_2(x,y)dx-V_1(x,y)dy)$$
where $F,G$ are rational functions.
\end{prop}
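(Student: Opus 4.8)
The plan is to recast the problem through the dictionary between first integrals, integrating factors, and closed rational $1$-forms. Write $\omega = V_2\,dx - V_1\,dy$, the rational $1$-form annihilating $V$. If $I$ is any first integral, then $dI$ also annihilates $V$, and since the space of such $1$-forms is one dimensional over the function field (wherever $V\neq 0$), there is a function $\mu$ with $dI = \mu\,\omega$; exactness of $dI$ forces $d(\mu\omega)=0$, so $\mu$ is an integrating factor, and conversely $I=\int\mu\,\omega$. Setting $\theta = d\mu/\mu$, the identity $d(\mu\omega)=\mu\,(\theta\wedge\omega + d\omega)$ shows $\mu$ integrates $\omega$ exactly when $d\omega = -\,\theta\wedge\omega$, where $\theta$ is automatically closed. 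Hence the statement is equivalent to: \emph{a Liouvillian first integral forces the existence of a closed \textbf{rational} $1$-form $\theta=F\,dx+G\,dy$ with $d\omega=-\theta\wedge\omega$}, for then $\mu=\exp\!\int\theta$ is hyperexponential and $I=\int \exp\!\big(\int F\,dx+G\,dy\big)(V_2\,dx-V_1\,dy)$ has the required shape. So the whole content is to produce such a \emph{rational} closed $\theta$.

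Next I would extend scalars to an algebraically closed constant field (say $\overline{\mathbb K}$ or $\mathbb C$, descending to $\mathbb K$ at the very end) and place the given Liouvillian first integral $I$ inside a Liouvillian tower of the partial differential field $(\mathbb K(x,y),\{\partial_x,\partial_y\})$ with the same field of constants,
$$\overline{\mathbb K}(x,y)=K_0\subset K_1\subset\cdots\subset K_m\ni I,$$
where each $K_{i+1}=K_i(t)$ is obtained by adjoining an element $t$ that is algebraic over $K_i$, a primitive ($dt$ has $K_i$-rational coefficients), or an exponential of a primitive ($dt/t$ has $K_i$-rational coefficients). Since $\mu=I_x/V_2=-I_y/V_1\in K_m$, the closed $1$-form $\eta:=-\theta=d\mu/\mu$ has coefficients in $K_m$ and satisfies $d\omega=\eta\wedge\omega$. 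The goal is then to prove, by induction on the tower length $m$, that $\eta$ can be replaced by a closed $1$-form with coefficients in $K_{m-1}$ enjoying the same property, and hence eventually in $\overline{\mathbb K}(x,y)$.

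The inductive step splits into the three types of generator, and this is where the classical machinery is invoked. When $t$ is algebraic I would simply Galois-average: replacing $\eta$ by $\tfrac{1}{[K_m:K_{m-1}]}\sum_\sigma \sigma(\eta)$ yields a $K_{m-1}$-rational closed form, and it still satisfies $d\omega=\eta\wedge\omega$ because $\omega$ is fixed, so $\sigma(\eta)\wedge\omega=\sigma(\eta\wedge\omega)=\sigma(d\omega)=d\omega$ for every $\sigma$. When $t$ is a primitive or an exponential, $t$ is transcendental over $K_{m-1}$ and each coefficient of $\eta$ is a rational function of $t$; writing out the two constraints $d\eta=0$ and $d\omega=\eta\wedge\omega$ and comparing the highest powers of $t$ — this is exactly a Rosenlicht-type argument, using that $dt$ (resp. $dt/t$) has $K_{m-1}$-rational coefficients — forces the $t$-dependence to degenerate (the leading coefficients must be constants and the genuinely transcendental part splits off as a closed form trivial against $\omega$), so $\eta$ descends to $K_{m-1}$. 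Iterating down to $K_0$ and finally averaging over $\mathrm{Gal}(\overline{\mathbb K}/\mathbb K)$ returns the desired closed $\mathbb K$-rational $\theta$.

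The main obstacle I expect is precisely this descent through the transcendental layers. One must keep the field of constants under strict control (no new constants may appear when adjoining primitives and exponentials, which is what legitimizes the Rosenlicht comparison), and one must ensure that eliminating each generator preserves \emph{both} closedness $F_y=G_x$ \emph{and} rationality of the surviving $1$-form rather than merely membership in the next field down. This is a genuinely two–variable refinement of the Liouville--Ostrowski theory for primitives, and it is the point at which the integrability of $\omega$ in the plane (automatic here, since $\omega\wedge d\omega=0$ for any $1$-form on a surface) is used to make the one-variable structure theorems apply coherently to the foliation; handling the exponential case, where $t$ enters through its logarithmic derivative, is the most delicate bookkeeping.
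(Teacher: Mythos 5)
You are effectively reconstructing Singer's original argument here, not the paper's: the paper states this proposition with a citation and gives no proof of it, so the only meaningful benchmark is Singer's 1992 proof --- and your outline does track it faithfully. The opening reduction (first integral $\Leftrightarrow$ integrating factor $\mu$ $\Leftrightarrow$ closed $1$-form $\theta$ with $d\omega=-\theta\wedge\omega$) is exactly the right reformulation, and your algebraic descent step is sound: the constraint pair ``$d\eta=0$ and $d\omega=\eta\wedge\omega$'' is affine-linear in $\eta$ with data $\omega$, $d\omega$ fixed by every $\sigma$, so the Galois (or trace) average of solutions is again a solution, modulo the small fix of passing to a normal closure when $K_m/K_{m-1}$ is not Galois.

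The genuine gap is the transcendental descent, which you dispatch in one sentence although it is the entire content of the theorem. Two concrete problems. First, a solution $\eta$ is determined only up to addition of $\lambda\omega$ with $d(\lambda\omega)=0$ (the ratio of two integrating factors is itself a first integral), so you cannot expect $\eta$ itself to descend to $K_{m-1}$; only some gauge-corrected $\eta+\lambda\omega$ will, and producing a $\lambda$ that simultaneously restores closedness and keeps the coefficients rational over $K_{m-1}$ is where the real work lies --- your sketch never engages this freedom. Second, ``comparing the highest powers of $t$'' does not decouple the two constraints degree by degree: since $\delta_x t,\delta_y t\in K_{m-1}$ (resp.\ $\delta_i t/t\in K_{m-1}$), differentiation shifts $t$-degrees, so the closedness condition $F_y=G_x$ in $t$-degree $0$ involves degree-$1$ coefficients; what is actually needed is the full Rosenlicht-type structure theorem for closed $1$-forms over a primitive or exponential extension (roughly $\eta=\eta_0+\sum_i c_i\,d\log P_i(t)+c\,dt$ with the $c_i,c$ constants), combined with the wedge relation to kill or absorb the genuinely $t$-dependent part. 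This in turn rests on the no-new-constants hypothesis, which you name but must actually arrange: a Liouvillian first integral is a priori only a germ of a (multivalued) function, and embedding it into a tower of differential fields of meromorphic germs with constant field $\mathbb{C}$ is a nontrivial normalization that Singer carries out explicitly. As submitted, your proposal is a correct road map of Singer's proof with the decisive lemma left unproved.
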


The inner exponential integral defines a hyperexponential function $H$, and then the first integral is defined as the integral of the closed $1$-form $H(V_2dx-V_1dy)$. However, this integral is typically not elementary. In fact, Singer proved that a necessary (but not sufficient) condition for elementary integration is $H$ to be algebraic. Instead of trying to integrate in elementary terms, we will try to write it using univariate functions composed with rational functions and hyperexponential functions. This can be seen as a generalization of elementary functions in which only $\exp,\ln$ are allowed as transcendental univariate functions. In other terms, is it possible to explicitly integrate hyperexponential $1$-forms by extending our tool kit with all single variable functions? Admitting the Schanuel conjecture, we will prove the following

\begin{thm}\label{thm1}
Let us consider $\omega\in\mathbb{K}(x)_n$ and a hyperexponential function $H$ with $dH/H\in\mathbb{K}(x)_n$. Assume $H\omega$ is closed and that $H$ is transcendental. Then
\begin{itemize}
\item There exist a rational function $F\in\overline{\mathbb{K}}(x)$ and a function $J(z,x)$ rational in $x$ such that
\begin{equation}\label{eq4}
\int H\omega =H(x)J(F(x),x)
\end{equation}
\item If $H\omega$ is not exact, there exist rational functions $f,g\in\overline{\mathbb{K}}(z),\;R,T\in \overline{\mathbb{K}}(x)$ such that
\begin{equation}\label{eq5}
H(x)=T(x)\exp{\int^{F(x)} g(z) dz}
\end{equation}\begin{equation}\label{eq5b}
\int H\omega =\int^{F(x)} f(z)e^{\int g(z) dz} dz+ H(x)R(x)
\end{equation}
\end{itemize}
\end{thm}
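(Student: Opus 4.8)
The plan is to transfer the problem to the linear connection attached to $H$. Writing $\alpha=dH/H\in\mathbb{K}(x)_n$ and $\Psi=\bigl(\int H\omega\bigr)/H$, a direct computation turns $d\bigl(\int H\omega\bigr)=H\omega$ into the inhomogeneous equation $d\Psi+\Psi\alpha=\omega$, that is $\nabla\Psi=\omega$ for the connection $\nabla=d+\alpha$ acting on rational functions. In the same notation the hypothesis that $H\omega$ is closed reads $d\omega=\omega\wedge\alpha$, a purely rational (and algorithmically checkable) identity, and one checks that $\nabla R$ obeys the same identity for every $R\in\overline{\mathbb{K}}(x)$, since $d\alpha=0$. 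Proving \eqref{eq4} therefore amounts to showing that the solution $\Psi$ is rational in $x$ and depends on $x$ transcendentally only through a single rational function $F$, while proving \eqref{eq5}--\eqref{eq5b} amounts to identifying that dependence explicitly.

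First I would run a Hermite-type reduction for $\nabla$ rather than for $d$. Because $d\alpha=0$, the polar behaviour of $\nabla R=dR+R\alpha$ along an irreducible divisor is governed exactly as in the classical case, so the standard elimination of high-order poles carries over: I can compute $R\in\overline{\mathbb{K}}(x)$ with $\omega=\nabla R+\tilde\omega$, where $\tilde\omega$ has only simple poles, located along the factors of the denominator $D$ of $\alpha$ together with the poles produced by $\omega$. This already yields the rational summand $H(x)R(x)$ of \eqref{eq5b} and reduces both items to the integration of the logarithmic form $H\tilde\omega$. If $\tilde\omega=0$ then $H\omega$ is exact and \eqref{eq4} holds with $J=R$, so from now on I assume $\tilde\omega\neq0$.

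The core step is to analyse the reduced form $\tilde\omega$, which still satisfies $d\tilde\omega=\tilde\omega\wedge\alpha$. Comparing residues of the two sides along each polar divisor, and using that $\alpha$ is logarithmic, I expect to show that the obstruction to solving $\nabla\tilde\Psi=\tilde\omega$ rationally is concentrated on a single algebraic direction: there is one rational function $F\in\overline{\mathbb{K}}(x)$, together with a rational $T$ and univariate rational $f,g$, such that $\alpha=dT/T+g(F)\,dF$ and $\tilde\omega=\bigl(f(F)/T\bigr)\,dF$. The first identity is exactly \eqref{eq5} after integration ($H=T\exp\int^{F}g$), and the second gives $H\tilde\omega=f(F)\,e^{\int^{F}g}\,dF$, whose antiderivative is the univariate integral $\int^{F(x)}f(z)e^{\int g(z)\,dz}\,dz$ of \eqref{eq5b}. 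Dividing through by $H$ recovers \eqref{eq4} with $J(z,x)=R(x)+T(x)^{-1}e^{-\int^{z}g}\int^{z}f\,e^{\int g}$, which is visibly rational in $x$.

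The delicate point, and the reason Schanuel's conjecture is invoked, is justifying the one-dimensionality claimed above: a priori the transcendental content of $\Psi$ could spread over several rational functions $F_1,\dots,F_k$ carrying independent univariate transcendentals. Here the transcendence of $H$ is essential. Writing $H=e^{r}\prod_i p_i^{\lambda_i}$ from the logarithmic decomposition of $\alpha$, Schanuel's conjecture lets me convert the hypothesis that $H$ is transcendental into the algebraic independence of the relevant exponentials and logarithms, so that no hidden algebraic relation can merge or cancel distinct univariate contributions. This forces every such contribution to factor through one and the same $F$ and pins down $f,g,T$, whose residues must then match those of $\alpha$, giving \eqref{eq5}. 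I expect this transcendence bookkeeping, rather than the reduction itself, to be the main obstacle.
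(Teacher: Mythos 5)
Your reformulation as $\nabla\Psi=\omega$ for the twisted connection $\nabla=d+\alpha$, $\alpha=dH/H$, and the split $\int H\omega=H(x)R(x)+\int H\tilde\omega$, are sound and consistent with \eqref{eq5b}; the problem is that everything the theorem actually asserts is concentrated in your ``core step'', which you only announce (``I expect to show\dots''). The claim that $\alpha=dT/T+g(F)\,dF$ and $\tilde\omega\equiv(f(F)/T)\,dF$ modulo $\nabla$-exact forms, for a \emph{single} rational $F$, is essentially equivalent to \eqref{eq5}--\eqref{eq5b}, so nothing has been proved. A divisor-by-divisor comparison of residues cannot yield it, because the existence and the degree of $F$ are governed by global integer relations among the residues of $\alpha$, not by local polar data (see Example~2 of the paper, where $\deg F=a$ grows with the height of such a relation while the polar divisors of $dH/H$ stay fixed). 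Your appeal to Schanuel is correspondingly misdirected: in the paper it enters only through Conjecture~\ref{conj1} (algebraic independence of the $e^{i\pi\lambda_j}$ for traceless $\mathbb{Q}$-independent $\lambda_j$) and is used to kill the \emph{additive periods} of $\int H\omega$. The actual argument fibers $\mathbb{C}^n$ by $\mathcal{F}_h=\{G=h\}$ with $G$ an indecomposable pullback of a nonconstant $F_s$, observes that the periods $\int_{\gamma(h)}H\omega$ are constant in $h$, and shows the period set $\mathcal{C}$ vanishes: by Casorati--Weierstrass along limit paths $h\to 0$ when $F_0$ is nonconstant, and otherwise by letting the monodromy in $h$ act on cycles, yielding $e^{2i\pi k\lambda_s}u=Mu$ with $M$ integral over $\mathbb{Z}(e^{2i\pi/q},(e^{2i\pi\lambda_j})_{j\neq s})$, so that $u\neq 0$ would force the algebraic relation $\chi_M(e^{2i\pi k\lambda_s})=0$, contradicting Conjecture~\ref{conj1}. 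Only then does one know $H^{-1}\int H\omega$ is rational on each fiber, which gives \eqref{eq4}; your ``transcendence bookkeeping'' names this obstacle but supplies no mechanism replacing the monodromy argument.

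Two further steps would fail as written. First, multivariate Hermite reduction for $\nabla=d+\alpha$ does not ``carry over exactly as in the classical case'': already in one variable, pole-order reduction fails at resonant divisors where the residue of $\alpha$ is a positive integer (for $\alpha=\lambda\,dx/x$ one has $\nabla(cx^{1-k})=c(\lambda-k+1)x^{-k}\,dx$, which vanishes when $\lambda=k-1$), and $\alpha=dH/H$ always carries rational-residue and irregular ($dF_0$) parts; in $n$ variables no such reduction is available off the shelf --- the generalized Hermite reduction of \cite{bostan2013hermite} that the paper invokes is univariate and is used only for Theorem~\ref{thm2}, \emph{after} Theorem~\ref{thm1} has reduced everything to the single variable $z$. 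Second, your dichotomy ``$\tilde\omega=0$ iff $H\omega$ exact'' is false: a form with only simple poles can perfectly well be $\nabla$-exact (e.g.\ $\nabla 1=\alpha$ itself), so arriving at $\tilde\omega\neq 0$ decides nothing, and your case split does not match the theorem's hypothesis. In the paper the exactness dichotomy is of a different nature: analyzing the tangential system \eqref{eq6} on the fibers via the rationality result for integrable connections of \cite{barkatou2012computing}, one finds that if $H$ restricted to $\mathcal{F}_h$ is \emph{not} rational then $H\omega$ is necessarily exact, while if it is rational the rational solutions form $C(h)H^{-1}+R(h,x)$, whence $\int H\omega=C(F(x))+H(x)R(F(x),x)$ with $C'$ hyperexponential --- and this is what produces $f,g,T$ in \eqref{eq5}--\eqref{eq5b}. (A minor misstatement: your opening goal ``show $\Psi$ is rational in $x$'' overstates \eqref{eq4}, since $J(z,x)$ is only required rational in $x$ for fixed $z$, and $\Psi=J(F(x),x)$ is genuinely transcendental whenever $H\omega$ is not exact; your closing formula for $J$ has this right.)
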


Remark that when $H\omega$ is exact, we can still write expression \eqref{eq5b} (but not always \eqref{eq5}), simply taking $f=0,g=0$, as the expression just becomes the condition of exactness of $H\omega$. Theorem \ref{thm1} is effective, and we will present in section $4.2$ an algorithm computing possible $F,T,R,f,g$. It appears moreover that it is always possible to choose them with coefficients in $\mathbb{K}$. The decomposition given by equations \eqref{eq5},\eqref{eq5b} is not unique. As we will see in section $4$, there is choice: $F$ can be chosen up to homographic transformation, the function $T$ can be multiplied by an arbitrary rational function in $F$. This allows to put $f=1$ by changing $g$. Integration by parts of $\int f(z)\exp{\int g(z) dz} dz$ can change the expression of $R$ by changing $f$.

It appears that when $H$ is transcendental, only one new single variable Liouvillian function, $\int f(z)\exp{\left(\int g(z) dz\right)} dz$, is necessary to express the integral. When $H$ is algebraic, the decomposition does not always exist, and as in the elementary case, several logs can be necessary for expressing the integral.

Given a square free polynomial $S\in\mathbb{K}[x]$ and a hyperexponential function $H$ and $D$ the denominator of $dH/H$, we can now consider the vector space $\mathcal{V}$ of closed $1$-forms in $H\mathbb{K}[x,1/(SD)]_n$. The cohomology of this vector space is its quotient by the subspace of exact forms. Theorem \ref{thm1} will allow us to compute a basis of such space. Indeed, the functions $F,T,g$ only depend on $H$, and $R$ is irrelevant as $\int H\omega$ is computed modulo exact forms. Thus the computation of representation \eqref{eq5b} of an element of $\mathcal{V}$ reduces to the determination of $f$ for which we will be able to control its poles thanks to $S$ and generalized Hermite reduction \cite{bostan2013hermite}.

\begin{thm}\label{thm2}
Given a transcendental hyperexponential function $H$, $D$ the denominator of $dH/H$ and a square free polynomial $S$, the cohomology of the vector space of closed $1$-forms in $H\mathbb{K}[x,1/(SD)]_n$ is finite dimensional and a basis is computed by algorithm \underline{\sf CohomologyBasis}.
\end{thm}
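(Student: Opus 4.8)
The plan is to reduce the multivariate cohomology to a single-variable twisted de Rham cohomology, exploiting the fact that Theorem~\ref{thm1} produces data $F,T,g$ that depend only on $H$. First I would fix these once and for all: applying the construction underlying Theorem~\ref{thm1} to $H$ yields $F\in\overline{\mathbb{K}}(x)$, $T\in\overline{\mathbb{K}}(x)$ and $g\in\overline{\mathbb{K}}(z)$ satisfying $H=T\exp\int^{F}g\,dz$. For any closed $H\omega\in H\mathbb{K}[x,1/(SD)]_n$ that is not exact, Theorem~\ref{thm1} then gives a rational $f\in\overline{\mathbb{K}}(z)$ with $\int H\omega=\int^{F(x)}f(z)e^{\int g(z)dz}dz+H(x)R(x)$, and the only part that varies with $\omega$ is $f$. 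This defines a linear map $\Phi\colon H\omega\mapsto f$ from closed forms to univariate rational functions, and the whole proof amounts to identifying the cohomology with the image of $\Phi$ modulo an explicit subspace.

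Next I would pin down the equivalence on $f$ induced by passing to cohomology. Writing $\nabla h=h'+gh$, integration by parts gives $\int^{F}(\nabla h)e^{\int g}\,dz=h(F)e^{\int^{F}g\,dz}=h(F)H/T$; since $F,T$ are rational, $h(F)/T$ is rational in $x$ whenever $h\in\overline{\mathbb{K}}(z)$, so the univariate Liouvillian term attached to $\nabla h$ is absorbed into an $H(x)R(x)$ summand. Hence two closed forms are cohomologous precisely when the corresponding $f$ differ by an element of $\nabla\overline{\mathbb{K}}(z)$, and the target becomes the twisted de Rham cohomology of the rank-one connection $d+g\,dz$ on the line punctured at the admissible poles. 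I would then check that $\Phi$ descends to an isomorphism onto this quotient, i.e. that it is well defined (independent of the choice of $R$), injective modulo exact forms, and surjective onto the admissible $f$.

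The remaining and most delicate step is pole control: I must show that the denominator bound by powers of $SD$ on the coefficients of $\omega$ translates into a fixed finite pole locus for $f$ in the $z$-variable. Here the denominator $D$ of $dH/H$ governs the poles of $g$, hence the singularities of the connection, while the square-free factor $S$, pulled back through the rational substitution $z=F(x)$, confines the poles of $f$; conversely every $f$ supported on this locus should be realised by some closed form in the space. Once this matching is established, I would invoke generalized Hermite reduction \cite{bostan2013hermite} for the connection $d+g\,dz$: it rewrites any admissible $f$ as $\nabla h$ plus a canonical remainder having only simple poles on the prescribed finite locus and bounded behaviour at infinity, so the quotient is finite dimensional and a basis is read off from the remainders. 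Packaging this reduction with the reconstruction $f\mapsto H\omega$ yields algorithm \underline{\sf CohomologyBasis}.

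I expect the main obstacle to be the faithful transfer of pole data between the $n$-variable setting and the one-variable connection through the non-injective, possibly high-degree map $F$: one must control exactly which poles of $f$ can occur, rule out spurious contributions at the ramification locus of $F$ and at the zeros of $T$, and prove surjectivity onto the admissible univariate forms. Establishing this dictionary faithfully is where the effective content of Theorem~\ref{thm1} is really put to work.
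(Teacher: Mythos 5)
Your architecture is exactly the paper's: fix $F,T,g$ once from \underline{\sf HyperexponentialDecomposition} since they depend only on $H$, reduce every closed $H\omega$ via Theorem~\ref{thm1} to a univariate $f$, observe that replacing $f$ by $f+\nabla h$ with $\nabla h=h'+gh$ only changes the form by something exact (your integration-by-parts computation is the right one), and finish with the generalized Hermite reduction of \cite{bostan2013hermite}, which is Proposition~\ref{prop3} in the paper. But the step you yourself flag as ``the most delicate'' --- the dictionary between the poles of $f$ and the locus $SD$ --- is precisely the content of the paper's proof, and you leave it as an announcement rather than an argument. The paper's mechanism is concrete: after Hermite reduction, $f$ has only simple poles $c$ outside $g_2^{-1}(0)$; at such a $c$ the weight $\exp\int g(z)dz$ is smooth, so $\int^{F}f(z)e^{\int g(z)dz}dz$ acquires a \emph{logarithmic} singularity along the hypersurface $F=c$; a logarithmic branch point cannot be cancelled by adding any term of the form $H(x)R(x)$ with $R$ rational, hence $H\omega$ itself must be singular along $F=c$, forcing $\hbox{num}(F-c)$ to divide a power of $SD$. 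This is what makes the admissible pole set $\Sigma$ finite and defines $Q$. The same argument at $z=\infty$ --- after first normalizing by a homography so that $g(z)dz$ is regular at infinity \emph{and} $\hbox{den}(F)$ does not divide a power of $SD$ --- shows the residue of $f$ at infinity must vanish, which removes one element from the raw basis count of Proposition~\ref{prop3}; your plan does not account for this correction.

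The converse direction (your ``every $f$ supported on this locus should be realised'') also needs machinery you do not supply: one must check that the candidate basis forms actually lie in $H\mathbb{K}[x,1/(SD)]_n$. The paper achieves this by normalizing $g$ before anything else --- differential reduction into kernel and shell, then integer shifts making every rational residue positive and the residue at infinity zero, each compensated through $T$. These normalizations guarantee $\hbox{num}(T)\mid D$: on a zero curve of $T$, the factor $\exp\int^{F}g(z)dz$ is either smooth, essentially singular, or ramified with positive or irrational exponent, so $H$ still vanishes or is singular there and the curve appears in $D$; consequently the forms $F^i dF/(TQ(F))$ and $F^i dF/(Tg_2(F))$ have denominators dividing a power of $SD$. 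Finally, since $F,T,g$ are a priori only over $\overline{\mathbb{K}}$, the paper sums over $\hbox{Gal}(\mathbb{K}(\alpha):\mathbb{K})$ to descend the basis to $\mathbb{K}[x,1/(SD)]_n$, a coefficient-field issue your proposal is silent on. In short: right reduction, right tools, correctly identified hard point, but the two load-bearing arguments --- persistence of logarithmic singularities pinning down $\Sigma$, and the residue normalization yielding $\hbox{num}(T)\mid D$ --- are missing, so the proposal as written does not yet prove the theorem.
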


We do not give an estimation of the cost of these algorithms because they manifest dependence to exponents. In particular, $R$ can have an arbitrary large degree even for given $H$ and bounded degree of $\omega$. The degree of the function $F$ can be controlled by the degree of the $F_i$ in $H$, but not by the degree of $dH/H$, as the degree of $F$ depends on rational relations between the residues of $dH/H$. As an application, we build the following algorithm, linearising differential equation by a rational variable change whenever it has a Liouvillian first integral (and none of lesser type).

\begin{cor}\label{cor1}
Let us consider the differential equation in dimension $1$
\begin{equation}\label{eq3}
\frac{\partial y}{\partial x}=V(x,y),\quad V\in\mathbb{K}(x,y).
\end{equation}
If \eqref{eq3} admits a Liouvillian first integral which is not a $k$-Dar\!bou\-xian (see definition in \cite{cheze2017symbolic}) nor exponential of Darbouxian, then there exists a rational variable change $X,Y\in\mathbb{K}(x,y)$ and $a,b\in\mathbb{K}(z)$ such that
$$\frac{\partial Y}{\partial X}=a(X)+b(X)Y$$
\end{cor}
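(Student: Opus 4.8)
The plan is to reduce Corollary~\ref{cor1} to Theorem~\ref{thm1} by producing, from a Liouvillian first integral, a closed hyperexponential $1$-form of the type treated there, and then to read off the linearising variable change from the decomposition \eqref{eq5}--\eqref{eq5b}.

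First I would invoke Singer's result (Proposition~1) to obtain a first integral of the form $\int H(x,y)\,(V_2\,dx - V_1\,dy)$, where $H=\exp\int F\,dx+G\,dy$ is hyperexponential. Writing the equation \eqref{eq3} as the planar vector field $V_1\,\partial_x+V_2\,\partial_y$ (with $V_1=1$, $V_2=V$, say), the $1$-form $\omega=V_2\,dx-V_1\,dy$ annihilates the field, so $H\omega$ is the closed form whose integral is the first integral. I would then split into cases according to whether $H$ is algebraic or transcendental. The hypotheses of the corollary — that the first integral is not $k$-Darbouxian nor an exponential of a Darbouxian — are precisely designed to exclude the algebraic case and the exact case; I would argue that in those excluded cases the integral would be Darbouxian or $k$-Darbouxian, so under our assumptions $H$ must be transcendental and $H\omega$ not exact. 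This is the step where I expect to have to be careful: translating the negative hypotheses ``not $k$-Darbouxian, not exponential of Darbouxian'' into the clean conditions ``$H$ transcendental and $H\omega$ not exact'' required by Theorem~\ref{thm1}, since one must rule out the degenerate decompositions (those with $f=0$ or $F$ constant) that would correspond to a lower-type first integral.

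With $H$ transcendental and $H\omega$ non-exact, Theorem~\ref{thm1} applies and yields $F,T,R\in\overline{\mathbb{K}}(x,y)$ and $f,g\in\overline{\mathbb{K}}(z)$ with
\begin{equation*}
\int H\omega = \int^{F} f(z)e^{\int g(z)dz}\,dz + H\,R,\qquad H = T\exp\int^{F} g(z)\,dz,
\end{equation*}
and moreover (as noted after Theorem~\ref{thm1}) we may normalise $f=1$. The key observation is that the univariate Liouvillian part $\int^{F} e^{\int g}\,dz$ is itself the first integral of the one-dimensional equation $Y'=g(z)Y+1$ in the variable $z=F$. So I would set the new independent variable $X=F(x,y)$ and take $Y$ to be the new dependent variable measuring $H\,R$, i.e.\ choose $Y$ so that the level sets of the first integral become the graphs of the linear equation; concretely, using $H=T\,e^{\int^{F}g}$ one rewrites $HR = T R\, e^{\int^{X} g}$ and absorbs $TR$ into $Y$. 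Along solution curves the first integral is constant, so differentiating $\int^{X} f\,e^{\int g}\,dz + Y\,e^{\int^{X} g}$ (after the substitution that makes $Y$ the fibre coordinate) with respect to $X$ gives $f(X) + Y'\,e^{\int g} + Y g(X) e^{\int g}=0$ along trajectories, i.e.\ $Y' = -f(X) - g(X)Y = a(X)+b(X)Y$ with $a=-f$, $b=-g$ rational in $X$.

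Finally I would check that the variable change $(x,y)\mapsto(X,Y)$ is genuinely rational: $X=F$ is rational by Theorem~\ref{thm1}, and the fibre coordinate $Y$ is built from $R$ and $T$, both rational, so $Y\in\overline{\mathbb{K}}(x,y)$; the remark following Theorem~\ref{thm1} that the data can be taken with coefficients in $\mathbb{K}$ lets me conclude $X,Y\in\mathbb{K}(x,y)$. The main obstacle is therefore not the final computation, which is essentially the chain rule applied to the explicit decomposition, but the case analysis of the first paragraph: verifying that the excluded ``lesser types'' correspond exactly to the degenerate outputs of Theorem~\ref{thm1}, so that the hypotheses of the corollary guarantee both transcendence of $H$ and a nondegenerate $g$ (so that $b(X)=-g(X)$ is really present and the resulting equation is affine-linear rather than a pure quadrature).
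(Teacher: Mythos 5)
Your proposal is correct and follows essentially the same route as the paper: Singer's integrating factor $\mathcal{R}$, the translation of the two excluded first-integral types into exactly the cases ``$\mathcal{R}$ algebraic'' and ``$\mathcal{R}\,dx-\mathcal{R}V\,dy$ exact'' (which is precisely the paper's two-sentence case analysis you flagged as the delicate step), then Theorem~\ref{thm1} with the variable change $X=F(x,y)$, $Y=R(x,y)T(x,y)$ and differentiation along level sets of $I$ yielding $\partial Y/\partial X=-f(X)-g(X)Y$, with rationality over $\mathbb{K}$ from the remark after Theorem~\ref{thm1}. There is no substantive difference, so nothing further to compare.
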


The Liouvillian first integrals of equation \eqref{eq3} up to degree $N$ (with a suitable notion of degree) can be found algorithmically, see \cite{cheze2017symbolic}. The condition on the first integral is equivalent to $H$ transcendental and $H\omega$ not exact, which are both done in Theorem \ref{thm1}.

\section{Hyperexponential functions}

\begin{defi}
A hyperexponential function $H$ in $n$ variables $x=(x_1,\dots,x_n)$ is a function satisfying $dH/H\in \mathbb{K}(x)_n$.
\end{defi}

We have that all logarithmic partial derivatives of $H$ are rational, i.e.
$$\partial_1H=F_1(x)H,\dots \partial_nH=F_n(x)H$$
with $F_i\in\mathbb{K}(x)$. This PDE system is holonomic of rank $1$, which implies that $H$ is a $D$-finite function, and that the $F_i$ (which are the coefficients of $dH/H$) define $H$ up to multiplication by a constant.

\begin{defi}
The trace of $\alpha\in \mathbb{L}$ where $\mathbb{L}$ is a field extension of $\mathbb{Q}$, noted $tr(\alpha)$, is minus the second leading coefficient of the monic minimal polynomial in $\mathbb{Q}$ of $\alpha$. We say that $\alpha$ is traceless if $tr(\alpha)=0$.
\end{defi}

\begin{prop}\label{prop1}
A hyperexponential function can be written under the form
$$e^{F_0(x)}A(x)^{1/q}\prod\limits_{i=1}^p F_i(x)^{\lambda_i}$$
with $q\in\mathbb{N}^*$, $\lambda_i\in\mathbb{L}$ traceless independent over $\mathbb{Q}$, $A,F_0\in\mathbb{K}(x),F_1,\dots,F_p\in \mathbb{L}(x)$ and $\mathbb{L}$ is a finite algebraic extension of $\mathbb{K}$.
The algorithm \underline{\sf RationalIntegration} computes from the rational one form $dH/H$ such a representation.
\end{prop}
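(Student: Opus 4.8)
The plan is to integrate the closed rational $1$-form $dH/H$ explicitly and then reorganise the result. First I would apply rational integration to $\omega_0:=dH/H\in\mathbb{K}(x)_n$: since $\omega_0$ is closed, generalized Hermite reduction removes the part with multiple poles and produces a rational primitive $F_0\in\mathbb{K}(x)$, leaving a form with only simple poles whose primitive is a combination of logarithms. Concretely one obtains
\[
\log H = F_0(x) + \sum_{j} c_j \log Q_j(x),
\]
where the $Q_j$ are the irreducible factors appearing in the denominators and the constants $c_j$ are the associated residues. Because $\omega_0$ is closed these residues are genuine constants, and being roots of Rothstein--Trager-type resultants with coefficients in $\mathbb{K}$ they are algebraic over $\mathbb{K}$; taking $\mathbb{L}$ to be the finite extension of $\mathbb{K}$ generated by the $c_j$ together with the coefficients of the $Q_j$ gives $c_j\in\mathbb{L}$ and $Q_j\in\mathbb{L}[x]$. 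Exponentiating yields $H=e^{F_0}\prod_j Q_j^{c_j}$ up to the multiplicative constant left undetermined by $dH/H$.

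The second step separates rational exponents from genuinely irrational ones. The key observation is that, by the tower formula $\mathrm{Tr}_{\mathbb{L}/\mathbb{Q}}(\alpha)=[\mathbb{L}:\mathbb{Q}(\alpha)]\,tr(\alpha)$, an element is traceless in the sense of the definition if and only if it lies in $\ker \mathrm{Tr}_{\mathbb{L}/\mathbb{Q}}$; since the field trace is $\mathbb{Q}$-linear, the traceless elements form a $\mathbb{Q}$-subspace of $\mathbb{L}$. I would therefore split each residue as $c_j=r_j+\hat c_j$ with $r_j:=\mathrm{Tr}_{\mathbb{L}/\mathbb{Q}}(c_j)/[\mathbb{L}:\mathbb{Q}]\in\mathbb{Q}$ and $\hat c_j:=c_j-r_j$ traceless.

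For the rational-exponent part $\sum_j r_j\log Q_j$ I would use that conjugate factors $Q_j$ carry conjugate residues, hence equal traces and equal $r_j$; grouping each Galois orbit makes the corresponding $\prod_j Q_j$ Galois-invariant and therefore a polynomial over $\mathbb{K}$. Collecting all orbits and clearing the common denominator $q$ of the $r_j$ gives $\sum_j r_j\log Q_j=\tfrac1q\log A$ with $A\in\mathbb{K}(x)$, which is the factor $A^{1/q}$. For the traceless part, the subgroup $\Lambda:=\sum_j\mathbb{Z}\hat c_j\subseteq\ker\mathrm{Tr}_{\mathbb{L}/\mathbb{Q}}$ is finitely generated and torsion-free, hence free; choosing a $\mathbb{Z}$-basis $\lambda_1,\dots,\lambda_p$ of $\Lambda$ yields traceless, $\mathbb{Q}$-independent constants together with integer coefficients $m_{ji}$ such that $\hat c_j=\sum_i m_{ji}\lambda_i$. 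The integrality of the $m_{ji}$ is what guarantees that $F_i:=\prod_j Q_j^{m_{ji}}$ is a genuine element of $\mathbb{L}(x)$ rather than a fractional power, so that $\sum_j\hat c_j\log Q_j=\sum_i\lambda_i\log F_i$. Exponentiating then produces exactly $e^{F_0}A^{1/q}\prod_{i=1}^p F_i^{\lambda_i}$, and every operation used is effective, which is what the algorithm \underline{\sf RationalIntegration} carries out.

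I expect the real work to be in the first step: justifying in the multivariate setting that the primitive of the closed form $dH/H$ has the pure rational-plus-constant-times-log shape and that the logarithmic constants generate only a finite extension of $\mathbb{K}$ (this is where I would lean on generalized Hermite reduction and a Rothstein--Trager residue analysis). The remaining reorganisation is linear algebra over $\mathbb{Q}$, with two delicate but clean points: the Galois descent showing $A\in\mathbb{K}(x)$ rather than merely $\mathbb{L}(x)$, and the passage to a $\mathbb{Z}$-basis of $\Lambda$ to force integer exponents so that the $F_i$ are honest rational functions.
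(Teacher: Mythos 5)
Your proposal is correct and follows essentially the same route as the paper's proof: Hermite reduction plus a Rothstein--Trager residue analysis, with closedness forcing the residues (a priori in $\overline{\mathbb{K}(x_1,\dots,x_{i-1})}$) to be constants in $\overline{\mathbb{K}}$, then subtraction of the normalized trace of each residue to absorb the rational part into $A^{1/q}$ (your Galois-orbit grouping matches the paper's shift by $s_j/\deg\tilde{S}_j$ attached to the $\mathbb{K}$-irreducible $Q_j$), and finally a $\mathbb{Z}$-module basis of the traceless residues to get $\mathbb{Q}$-independent $\lambda_i$ with integer exponents, hence honest $F_i\in\mathbb{L}(x)$. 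The only real difference is presentational: the paper carries out the integration step you flag as ``the real work'' via an explicit variable-by-variable recursion (Hermite reduction in $x_i$ over $\mathbb{K}(x_1,\dots,x_{i-1})$, with loop invariants tracking that $\omega$ stays closed and its last coefficients vanish), whereas you argue the same points in one pass.
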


\noindent\underline{\sf RationalIntegration}\\
\textsf{Input:} A rational closed differential form $\omega\in \mathbb{K}(x)_n$.\\
\textsf{Output:} Rational fractions $A,F_0\in \mathbb{K}(x),F_1,\dots, F_p\in\mathbb{L}(x)$, $q\in\mathbb{N}^*$, $\lambda_1,\dots,\lambda_p\in\mathbb{L}$ traceless independent over $\mathbb{Q}$ where $\mathbb{L}=\mathbb{K}(\lambda_1,\dots,\lambda_p)$, such that a solution of $dH/H=\omega$ is
$$e^{F_0(x)}A(x)^{1/q}\prod\limits_{i=1}^p  F_i(x)^{\lambda_i}.$$
\begin{enumerate}
\item Let $F_0=0,A=1$ and $Res=[\;\!]$. From $i$ from $n$ to $1$ do
\begin{enumerate}
\item Compute the Hermite reduction of $\omega_i-\partial_i F_0$ in $\mathbb{K}(x_1,\dots,x_{i-1})(x_i)$, giving
$$\int \omega_i dx_i=\int \frac{P}{Q} dx_i+R.$$
\item Factorize $Q=Q_1\dots Q_l$. For $j$ from 1 to $l$ do
\begin{enumerate}
\item Compute the resultant of $(Q_j,P-\lambda \partial_i Q_j)$ and select all factors depending in $\lambda$ only, giving a polynomial $S_j\in\mathbb{K}[\lambda]$.
\item Compute $\tilde{S}_j= \prod_{\sigma\in Gal(\mathbb{K}:\mathbb{Q})} \sigma(S_j)$, and note $-s_j$ the quotient of its second leading coefficient by its leading coefficient.
\item Redefine $A=A Q_j^{s_j/{\deg \tilde{S}_j}}$.
\item For all roots $\lambda$ of $S_j$, add
$$[\lambda-s_j/{\deg \tilde{S}_j},gcd(Q_j,P-\lambda \partial_i Q_j)]$$
to the list $Res$, and redefine $F_0=F_0+R$.
\item Redefine $\omega$ as the differential form
\begin{equation}\label{eq1}
\!\!\!\!\!\!\!\!\omega-dR-\sum_{\lambda\in S_j^{-1}(0)} \lambda \frac{d(gcd(Q_j,P-\lambda \partial_i Q_j))}{gcd(Q_j,P-\lambda \partial_i Q_j)}
\end{equation}
\end{enumerate}
\end{enumerate}
\item Compute the minimal $q\in\mathbb{N}^*$ such that $A^q\in\mathbb{K}(x)$ and redefine $A=A^q$.
\item Compute a basis $B$ of the $\mathbb{Z}$ module generated by the $(Res_{i,1})$, and a integer passage matrix $M$.
\item Return
$$\left[F_0,\left[A,\frac{1}{q}\right],\left[ B_j, \left(\prod\limits_{i=1}^{\sharp Res} Res_{i,2}^{M_{i,j}}\right)\right]_{j=1\dots \sharp B}\right].$$
\end{enumerate}

Using this algorithm, from a closed $1$ form $\omega$, we obtain the elementary expression of the hyperexponential function $H$ such that $dH/H=\omega$. Remark that the most costly step from a theoretical point of view is the computation of the field $\mathbb{L}$, which is given by splitting fields of polynomials $S$, thus of the size order of $(\deg S)!$. However, the explicit computation of the field is necessary to compute the basis $B$ to ensure the $\lambda_i$ are traceless independent over $\mathbb{Q}$. This condition is not necessary to write down $H$ under elementary form, but will be for the algorithm of Theorem \ref{thm1}.

\begin{proof}
We will prove the termination and correctness of the algorithm, which will also give us the existence of such elementary expression. We first recall the result of Trager algorithm \cite{trager1976algebraic}, which takes in input a rational fraction in $\mathbb{K}(z)$ and returns an integral of the form
$$F_0(z)+\sum_{i=1}^p \lambda_i \ln F_i(z)$$
with $\lambda_i \in \overline{\mathbb{K}}$, $F_0\in\mathbb{K}(z)$ and $F_i\in\mathbb{K}(\lambda_1,\dots,\lambda_p)(z)$.

Let us first prove by recurrence the following properties are true at the beginning of the $j$ loop
\begin{itemize}
\item The form $\omega$ is a closed rational $1$-form with coefficients in $\mathbb{K}(x_1,\dots,x_i)$
\item We have $F_0\in\mathbb{K}(x)$ and $A^q\in\mathbb{K}(x)$ for some $q\in\mathbb{N}^*$.
\item The last $n-i$ coefficients of $\omega$ are zero.
\item The first entries of $Res$ are traceless.
\item The quantity $\omega+dF_0+\frac{dA}{A}+\sum_{k=1}^{\sharp Res} Res_{k,1} \frac{d Res_{k,2}}{Res_{k,2}}$ is invariant
\end{itemize}

For $i=n,j=1$, this is true by hypothesis on the input. Now assume it is true for some $i,j$.
In step $(a)$, we compute the Hermite reduction, ensuring that $Q$ has only simple factors in $x_i$. In step $(i)$, the roots of the resultant are the residues associated to the roots of $Q_j$. A priori, the residues are elements $\overline{\mathbb{K}(x_1,\dots,x_{i-1})}$. However, we also know that $\int \omega_i dx_i$ should have all its other derivatives in $x_1,\dots,x_{i-1}$ rational. This requires that all the residues to be constant with respect to these derivations. As they are in $\overline{\mathbb{K}(x_1,\dots,x_{i-1})}$, they should then be in $\overline{\mathbb{K}}$. Thus all the roots in $\lambda$ of the resultant are constant, and thus are roots of $S_j$.

As $Q_j$ is irreducible, its roots are conjugated, and so are the residues. Thus $S_j$ is a power of an irreducible polynomial, and so it $\tilde{S}$. Now shifting the roots by $s_j/{\deg \tilde{S}_j}$ ensure that their minimal polynomial has now $0$ as second leading coefficient. Thus $\lambda-s_j/{\deg \tilde{S}_j}$ is traceless, and thus so are all the first entries of $Res$. 

Let us now remark that
\begin{equation}\begin{split}\label{eq9}
\sum_{\lambda\in S_j^{-1}(0)} \lambda \frac{d(gcd(Q_j,P-\lambda \partial_i Q_j))}{gcd(Q_j,P-\lambda \partial_i Q_j)}=\\
\frac{s_jdQ_j}{Q_j\deg \tilde{S}_j}+\sum_{\lambda\in S_j^{-1}(0)} (\lambda-s_j/{\deg \tilde{S}_j}) \frac{d(gcd(Q_j,P-\lambda \partial_i Q_j))}{gcd(Q_j,P-\lambda \partial_i Q_j)}
\end{split}\end{equation}
In steps $(iii),(iv)$, we add to the list $Res$ the terms forming the right-hand side of \eqref{eq9}, we multiply $A$ by $Q_j^{s_j/{\deg \tilde{S}_j}}$ thus forming the $\frac{s_jdQ_j}{Q_j\deg \tilde{S}_j}$ of the right-hand side of \eqref{eq9}. We still have $A^q \in \mathbb{K}(x)$ for some $q\in\mathbb{N}^*$. Now the quantity
$$\frac{dA}{A}+\sum_{j=1}^{\sharp Res} Res_{j,1} \frac{d Res_{j,2}}{Res_{j,2}}$$
increases by \eqref{eq9}. Also $F_0$ increases by $R$, and so stays in $\mathbb{K}(x)$. This is compensated in step $(v)$ where $\omega$ decreases by the same quantities. Thus
$$\omega+dF_0+\frac{dA}{A}+\sum_{k=1}^{\sharp Res} Res_{k,1} \frac{d Res_{k,2}}{Res_{k,2}}$$
stays invariant. The expression $R$ plus equation \eqref{eq9} is an integral of $\omega$ in $x_i$, thus after the redefinition of $\omega$, we have $\omega_i=0$, and so $\int \omega$ does not depend on $x_i$. As is depended only in $x_1,\dots,x_i$ by hypothesis, it now only depend on $x_1,\dots,x_{i-1}$. It is still closed as we have subtracted to it a closed form.

We now unroll the recurrence properties up to $i=0$, and apply the redefinition of $A$ in step $(2)$, giving that
\begin{equation}\label{eq2}
F_0+\frac{dA}{qA}+\sum_{i=1}^{\sharp Res} Res_{i,1}\ln Res_{i,2}
\end{equation}
is an integral of the original $\omega$, and thus its exponential is an expression of $H$ as $H=\exp \int \omega$. The new $A$ is now in $\mathbb{K}(x)$ as required.

Now in step $(3)$, we compute a basis of the $\mathbb{Z}$-module of the $(Res_{i,1})_{i=1\dots \sharp Res}$. Thus any $Res_{i,1}$ can be written as an integer linear combination of the $B$, which is given by the $i$-th line of passage matrix $M$. In step $(4)$, the integral \eqref{eq2} is rewritten using the additive rules of the logs. The fact that the entries of $M$ are integers ensures that the products are rational, and the elements of $B$ are $\mathbb{Q}$ independent by construction. They are traceless as the trace is a $\mathbb{Q}$-linear function. The field $\mathbb{L}$ is given by $\mathbb{K}(B)$.
\end{proof}

\section{Existence theorem}

In this section, we will prove Theorem \ref{thm1}. The necessary number theory result we will need for our proofs is the following

\begin{conj}\label{conj1}
Given $\lambda_1,\dots,\lambda_p\in\mathbb{L}$ traceless linearly independent over $\mathbb{Q}$, the numbers
$$e^{i\pi\lambda_1},\dots,e^{i\pi\lambda_p}$$
are algebraically independent.
\end{conj}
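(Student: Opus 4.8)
The plan is to deduce the statement from Schanuel's conjecture, which asserts that if $w_1,\dots,w_m\in\mathbb{C}$ are linearly independent over $\mathbb{Q}$ then $\operatorname{trdeg}_{\mathbb{Q}}\mathbb{Q}(w_1,\dots,w_m,e^{w_1},\dots,e^{w_m})\ge m$. The obvious attempt is to take $w_j=i\pi\lambda_j$: since the $\lambda_j$ are $\mathbb{Q}$-linearly independent and $i\pi\neq 0$, these are $\mathbb{Q}$-linearly independent, so Schanuel gives transcendence degree at least $p$ for the field generated by them together with their exponentials. However, the $\lambda_j$ lie in the number field $\mathbb{L}$, so each $w_j=i\pi\lambda_j$ is an algebraic multiple of $\pi$; hence $\mathbb{Q}(w_1,\dots,w_p)$ already carries transcendence degree one, coming from $\pi$. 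Naively subtracting this contribution leaves only $p-1$ algebraically independent exponentials, one short of the claim.

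The traceless hypothesis is exactly what repairs this deficit, and I expect this to be the crux of the argument. The idea is to enlarge the list by the extra number $w_0=i\pi$. Because the $\lambda_j$ are traceless while $tr(1)=[\mathbb{L}:\mathbb{Q}]\neq 0$, applying the $\mathbb{Q}$-linear map $tr$ to a relation $a_0+\sum_j a_j\lambda_j=0$ forces $a_0 tr(1)=0$, hence $a_0=0$, and then $a_j=0$ by independence of the $\lambda_j$; thus $1,\lambda_1,\dots,\lambda_p$ are linearly independent over $\mathbb{Q}$, and so are $w_0,w_1,\dots,w_p$ after multiplying by $i\pi$. Schanuel applied to these $p+1$ numbers then gives
\[
\operatorname{trdeg}_{\mathbb{Q}}\mathbb{Q}\bigl(w_0,\dots,w_p,e^{w_0},\dots,e^{w_p}\bigr)\ge p+1 .
\]

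To finish I would exploit that $e^{w_0}=e^{i\pi}=-1$ is algebraic and that every $w_k$ lies in $\overline{\mathbb{Q}}(\pi)$, so the field above is contained in $\overline{\mathbb{Q}}\bigl(\pi,e^{i\pi\lambda_1},\dots,e^{i\pi\lambda_p}\bigr)$, which is generated over $\overline{\mathbb{Q}}$ by only $p+1$ elements and hence has transcendence degree at most $p+1$. Comparing with the lower bound forces equality, so $\pi,e^{i\pi\lambda_1},\dots,e^{i\pi\lambda_p}$ are algebraically independent over $\overline{\mathbb{Q}}$; in particular the $p$ numbers $e^{i\pi\lambda_1},\dots,e^{i\pi\lambda_p}$ are algebraically independent, as desired. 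The only delicate bookkeeping is the transcendence-degree comparison, where one passes freely between $\mathbb{Q}$ and $\overline{\mathbb{Q}}$ as base field (legitimate since $\overline{\mathbb{Q}}/\mathbb{Q}$ is algebraic) and observes that the algebraic value $e^{i\pi}$ contributes nothing to the count, so that adjoining $w_0$ buys one unit of transcendence degree for the price of the $\pi$ we needed to discard.
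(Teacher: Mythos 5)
Your derivation is correct and is precisely the implication the paper has in mind: the statement is posed as a conjecture, justified only by the remark that it follows from Schanuel's conjecture for $p\geq 2$ (and unconditionally from Gelfond for $p=1$), and your trace argument — showing $1,\lambda_1,\dots,\lambda_p$ are $\mathbb{Q}$-linearly independent so that adjoining $w_0=i\pi$ gains exactly the unit of transcendence degree lost to $\pi$ — is the intended role of the traceless hypothesis. One pedantic fix: with the paper's definition (minus the subleading coefficient of the minimal polynomial) one has $tr(1)=1$, not $[\mathbb{L}:\mathbb{Q}]$, and that $tr$ is not literally $\mathbb{Q}$-linear; replace it by the field trace $\mathrm{Tr}_{\mathbb{L}/\mathbb{Q}}$, which is $\mathbb{Q}$-linear, satisfies $\mathrm{Tr}_{\mathbb{L}/\mathbb{Q}}(1)=[\mathbb{L}:\mathbb{Q}]$, and vanishes exactly on the paper's traceless elements since $\mathrm{Tr}_{\mathbb{L}/\mathbb{Q}}(\alpha)=[\mathbb{L}:\mathbb{Q}(\alpha)]\,tr(\alpha)$, after which your computation is exact.
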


For $p=1$, this conjecture is true thanks to the result of Gelfond \cite{gelfond1934septieme}. For $p\geq 2$, it is implied by Schanuel conjecture. For our proof of Theorem \ref{thm1}, the Schanuel conjecture is not needed in the following cases
\begin{itemize}
\item If $H$ has a non trivial exponential part $F_0$.
\item If $H$ has only one irrational power, there is only one $F_i$ as then $p=1$.
\end{itemize}
The problematic case is when $p\geq 2$ and without an exponential part. We will need the notion of decomposability.

\begin{defi}
A rational fraction $G\in\mathbb{K}(x)$ is decomposable when there exists $u\in\overline{\mathbb{K}}(z)$ of degree $\geq 2$ and $F\in\overline{\mathbb{K}}(x)$ such that $G(x)=u(F(x))$. The decomposition of a rational function $G\in\mathbb{K}(x)$ is an indecomposable function $F\in\overline{\mathbb{K}}(x)$ such that
$$\exists u\in\overline{\mathbb{K}}(z), \deg u\geq 2, G(x)=u(F(x)).$$
Moreover $F$ is unique up to homographic transformation.
\end{defi}

\begin{prop}[See \cite{cheze2014decomposition}]
The decomposition of a rational function $G\in\mathbb{K}(x)$ is in $\mathbb{K}(x)$ and is unique up to homographic transformation.
\end{prop}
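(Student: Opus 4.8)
The plan is to translate the statement into the language of subfields of $\overline{\mathbb{K}}(x)$ and to invoke the extended L\"uroth (Gordan) theorem, treating the descent to $\mathbb{K}$ separately. Write $\Gamma=\mathrm{Gal}(\overline{\mathbb{K}}/\mathbb{K})$, acting on $\overline{\mathbb{K}}(x)$ through the coefficients and fixing each $x_i$, and let $E$ be the relative algebraic closure of $\overline{\mathbb{K}}(G)$ inside $\overline{\mathbb{K}}(x)$. Since $G$ is transcendental, $E$ has transcendence degree $1$ over $\overline{\mathbb{K}}$, so by the extended L\"uroth theorem $E=\overline{\mathbb{K}}(F)$ for some $F\in\overline{\mathbb{K}}(x)$. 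The first thing I would record is the dictionary ``$F$ indecomposable $\iff$ $\overline{\mathbb{K}}(F)$ is algebraically closed in $\overline{\mathbb{K}}(x)$'': if $F=v(F')$ with $\deg v\geq 2$ then $F'$ is algebraic of degree $\deg v$ over $\overline{\mathbb{K}}(F)$ but not contained in it, and conversely any proper algebraic extension $\overline{\mathbb{K}}(F)\subsetneq\overline{\mathbb{K}}(F')$ inside $\overline{\mathbb{K}}(x)$ (again produced by L\"uroth) yields such a $v$. Because $E$ is its own relative algebraic closure, the generator $F$ of $E$ is indecomposable, and $G\in E=\overline{\mathbb{K}}(F)$ gives $G=u(F)$ with $u\in\overline{\mathbb{K}}(z)$ of degree $[\overline{\mathbb{K}}(F):\overline{\mathbb{K}}(G)]$, which is $\geq 2$ precisely when $G$ is decomposable; so $F$ is a decomposition of $G$.

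For uniqueness up to homography, suppose $F_1,F_2$ are both indecomposable with $G=u_i(F_i)$. Each $F_i$ is algebraic over $\overline{\mathbb{K}}(G)$, so $\overline{\mathbb{K}}(F_i)\subseteq E$; since $\overline{\mathbb{K}}(F_i)$ is algebraically closed in $\overline{\mathbb{K}}(x)$ and contains $\overline{\mathbb{K}}(G)$, it also contains $E$, whence $\overline{\mathbb{K}}(F_1)=\overline{\mathbb{K}}(F_2)=E$. Two generators of the same rational function field over $\overline{\mathbb{K}}$ differ by an element of $\mathrm{PGL}_2$, i.e.\ $F_1=h(F_2)$ for a homography $h$, which is the asserted uniqueness.

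The real content, and the step I expect to be the main obstacle, is rationality: producing a representative in $\mathbb{K}(x)$. Since $G\in\mathbb{K}(x)$ is $\Gamma$-fixed, $\overline{\mathbb{K}}(G)$ and hence its relative algebraic closure $E$ are $\Gamma$-stable. By Galois descent for the extension $\overline{\mathbb{K}}(x)/\mathbb{K}(x)$ (the $\Gamma$-action is continuous, as every element has coefficients in a finite extension of $\mathbb{K}$), the fixed field $E^{\Gamma}=E\cap\mathbb{K}(x)$ satisfies $E=\overline{\mathbb{K}}\cdot E^{\Gamma}$; moreover $\mathbb{K}$ is algebraically closed in $\mathbb{K}(x)$, hence in $E^{\Gamma}$, so $E^{\Gamma}$ is the function field of a curve $C/\mathbb{K}$ that becomes rational over $\overline{\mathbb{K}}$ (indeed $\overline{\mathbb{K}}\cdot E^{\Gamma}=E=\overline{\mathbb{K}}(F)$). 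The remaining difficulty is to show $C$ has a $\mathbb{K}$-rational point, so that $C\cong\mathbb{P}^1_{\mathbb{K}}$ and $E^{\Gamma}=\mathbb{K}(F_0)$ with $F_0\in\mathbb{K}(x)$; equivalently, one must trivialize the class in $H^1(\Gamma,\mathrm{PGL}_2(\overline{\mathbb{K}}))$ arising from the cocycle $\sigma\mapsto h_\sigma$ defined by $\sigma(F)=h_\sigma(F)$. This is exactly where the hypothesis $E\subseteq\mathbb{K}(x)$ is used: the inclusion $E^{\Gamma}=\mathbb{K}(C)\subseteq\mathbb{K}(x)$ is a dominant rational map $\mathbb{A}^n\dashrightarrow C$ defined over $\mathbb{K}$, and since $\mathbb{K}$ is infinite its domain of definition contains a $\mathbb{K}$-point, whose image is a $\mathbb{K}$-point of $C$. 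Hence $C\cong\mathbb{P}^1_{\mathbb{K}}$, $E=\overline{\mathbb{K}}(F_0)$ with $F_0\in\mathbb{K}(x)$ indecomposable, and $G=\tilde u(F_0)$ is a decomposition defined over $\mathbb{K}$. The uniqueness established above then shows every decomposition is a homographic transform of this rational one, completing the proof.
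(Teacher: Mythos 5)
The paper contains no internal proof of this proposition: it is imported verbatim from \cite{cheze2014decomposition}, and what follows it in the source is the proof of Theorem \ref{thm1}, which merely uses the statement (as does the correctness proof of \underline{\sf HyperexponentialDecomposition}). So the comparison is between your argument and the cited literature, not anything in the paper itself. Your proof is, as far as I can check, correct, and it is the classical field-theoretic route: the identification of decompositions with intermediate fields via the extended L\"uroth (Gordan) theorem, the dictionary ``$F$ indecomposable $\iff$ $\overline{\mathbb{K}}(F)$ relatively algebraically closed in $\overline{\mathbb{K}}(x)$'', and uniqueness of the generator of $E$ up to $\mathrm{PGL}_2(\overline{\mathbb{K}})$ are all standard and correctly executed. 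The rationality step is the real content, and your resolution is sound: the obstruction is a genus-zero curve $C/\mathbb{K}$ (equivalently a class in $H^1(\Gamma,\mathrm{PGL}_2(\overline{\mathbb{K}}))$), and it dies because $C$ is $\mathbb{K}$-unirational --- the inclusion $E^{\Gamma}\subseteq\mathbb{K}(x)$ gives a dominant $\mathbb{K}$-rational map $\mathbb{A}^n\dashrightarrow C$ whose domain of definition has a $\mathbb{K}$-point since $\mathbb{K}\supseteq\mathbb{Q}$ is infinite, and a genus-zero curve with a smooth rational point is $\mathbb{P}^1_{\mathbb{K}}$. Three minor points to tighten, none fatal: (i) exclude constant $G$, else $E$ does not have transcendence degree one; (ii) to speak of a curve you need $E^{\Gamma}$ finitely generated over $\mathbb{K}$ --- note that the minimal polynomial over $\overline{\mathbb{K}}(G)$ of a $\Gamma$-fixed element of $E$ is itself $\Gamma$-fixed with coefficients in $\mathbb{K}(G)$, whence $[E^{\Gamma}:\mathbb{K}(G)]\leq[E:\overline{\mathbb{K}}(G)]<\infty$ --- and you should take $C$ to be the smooth projective model so that the rational point you produce is automatically a smooth point; (iii) the descent $E=\overline{\mathbb{K}}\cdot E^{\Gamma}$ deserves its one-line justification at finite level (Speiser's lemma applied to the $\mathbb{K}'$-span of a $\Delta$-orbit, $\mathbb{K}'/\mathbb{K}$ finite Galois), exactly as your continuity remark indicates. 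As a bonus, your argument yields slightly more than the stated proposition: the outer function $u$ can also be chosen in $\mathbb{K}(z)$, which is consonant with the paper's later claim that $F,T,g$ can all be taken with coefficients in $\mathbb{K}$.
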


\begin{proof}[of Theorem \ref{thm1}]
We consider $H$ written under the form
$$H(x)=e^{F_0(x)}A(x)^{1/q}\prod\limits_{i=1}^p F_i(x)^{\lambda_i},$$
where the $\lambda_i$ are traceless independent over $\mathbb{Q}$. Let us consider a $s\in\{0,\dots, p\}$ with non constant $F_s$ and its decomposition of $F_s(x)=u(G(x))$ with $G$ indecomposable and we can choose $u(0)=\infty$. Now we consider the manifold
$$\mathcal{F}_h=\{x\in\mathbb{C}^n, G(x)=h\} \setminus \{\hbox{zero and poles of} F_i,\; i=0\dots p\}.$$
As $G$ is indecomposable, we know that its spectrum, i.e. the set $(\lambda:\mu)\in\mathbb{P}^1$ such that $\lambda \hbox{num}(G)-\mu \hbox{den}(G)$ factorizes, is finite (see \cite{buse2011total}). Thus there exists a disc $\mathcal{D}$ centred in $0$ without $0$, such that $\mathcal{F}_h$ is smooth $\forall h\in\mathcal{D}$.

We now restrict $H\omega$ to $\mathcal{F}_h$ for some $h\in\mathcal{D}$. The integral of this $1$-form of the algebraic manifold $\mathcal{F}_h$ defines a multivalued function. The multivaluation comes from two reasons
\begin{itemize}
\item ``multiplicative multivaluation'': $H\omega$ is hyperexponential, thus a small loop around the zero or pole of a $F_j$ of multiplicity $k$ multiplies the integral by $e^{2i\pi k\lambda_j}$.
\item ``Additive multivaluation'': the manifold $\mathcal{F}_h$ has not a priori a trivial homotopy group, and thus the integral $\int_\gamma H\omega$ for $\gamma$ a closed loop on $\mathcal{F}_h$ could be non zero. Thus such loops add to the integral some constants.
\end{itemize}
Let us first remark that given a closed loop $\gamma(h)$ on $\mathcal{F}_h$, continuous on $h$, the quantity $\int_{\gamma(h)} H\omega$ does not depend on $h$: indeed, the form $H.\omega$ is not only closed on $\mathcal{F}_h$, but in fact on $\mathbb{C}^n$, so the value is constant with respect of continuous deformations of $\gamma$.

The form $H\omega$ is hyperexponential, and thus is defined on a Riemann surface $\mathcal{R}_h$ above $\mathcal{F}_h$. Any closed loop $\gamma$ on $\mathcal{R}_h$ can be decomposed (non commutatively) as closed loops on $\mathcal{F}_h$ and turns around the roots and poles of the $F_i$. Two closed loops $\gamma_1,\gamma_2$ with the same projection on $\mathcal{F}_h$ have the property
$$\int_{\gamma_1} H\omega = \alpha \int_{\gamma_2} H\omega,\qquad \alpha=\exp {2i\pi\left(\frac{n_0}{q}+\sum_{l\neq s} n_l\lambda_l\right)}$$
with $n_i\in\mathbb{Z}$. Indeed, between two points on $\mathcal{R}_h$ with the same projection on $\mathcal{F}_h$, the $1$-form $H\omega$ is simply multiplied by such constant.

Let us note $d=\hbox{dim}(H_1(\mathcal{F}_h,\mathbb{Z}))$. There exist $d$ constants, $u_1,\dots,u_d\in\mathbb{C}$, which are the values of the integral $\int H.\omega$ on a basis of the homotopy group $H_1(\mathcal{F}_h,\mathbb{Z})$. Now the integral $\int_\gamma H.\omega$ on a closed loop $\gamma$ on $\mathcal{R}_h$ will be an integer combination of those $u_j$ and their multiples by elements of the form $\alpha$ , i.e. in the set
$$\mathcal{C}=\left\lbrace\sum_{\underset{j=1\dots d,}{n_i\in\mathbb{Z}}}^{\hbox{finite}} m_{j,n} u_j \exp{2i\pi\left(\frac{n_0}{q}+\sum_{l\neq s} n_l\lambda_l\right)},\;\; m_{j,n}\in\mathbb{Z}\right\rbrace.$$

Let us prove that $\mathcal{C}=\{0\}$. Two cases appear.\\

We can choose $s=0$, i.e. $F_0$ is not constant.\\
We now consider a limit path towards $h=0$. At the limit $h=0$, $\mathcal{F}_h$ is possibly no longer smooth, however the values of the monodromy $\mathcal{C}$ are conserved by passing at the limit because it is constant with respect to $h$. Using the Casorati-Weierstrass theorem, we can choose for the limit of $\exp(u(h))$ any value we want (as $h=0$ is an essential singularity). Thus by choosing two different limit paths, we define a transformation on the space of cycles, and the values of $H\omega$ differ only by a factor $\alpha\in\mathbb{C}^*$. The set $\mathcal{C}$ should thus be stable by multiplication by $\alpha$. As $\alpha$ can be chosen $\alpha$ arbitrary, $\mathcal{C}=\{0\}$.\\

We have $F_0$ constant, and then $p\geq 1$, so we can choose $s\geq 1$ (as else $H$ would be algebraic).\\
We consider a loop around $0$ in $\mathcal{D}$. As $\mathcal{F}_h$ is always smooth, any cycle on $\mathcal{R}_h$ deforms continuously, and thus after coming back to the initial $h$, is again a cycle on $\mathcal{R}_h$. This defines a linear transformation on the homotopy space of $\mathcal{R}_h$. This also multiplies $H.\omega$ by $e^{2i\pi k\lambda_s}$ for some $k\in\mathbb{Z}$. We know that $\int_\gamma H.\omega \in\mathcal{C}$ for any closed loop on $\mathcal{R}_h$, and thus $\mathcal{C}$ should be stable by multiplication by $e^{2i\pi k\lambda_s}$. Thus we have a relation of the form
$$e^{2i\pi k\lambda_s} u=  M u,\quad M\in M_d\left(\mathbb{Z}\left(e^{2i\pi/q},(e^{2i\pi \lambda_j})_{j\neq s}\right)\right).$$
Now if $u\neq 0$, this defines an eigenvector of eigenvalue $e^{2i\pi k\lambda_s}$. Thus, as $e^{2i\pi/q}$ is algebraic, $\chi_M(e^{2i\pi k\lambda_s})=0$ defines a non trivial algebraic relation between the $e^{2i\pi\lambda_j}$. Now according to the Conjecture \ref{conj1}, such relation is not possible. Thus $u=0$, and so $\mathcal{C}=\{0\}$.\\

As $\mathcal{C}=\{0\}$, the integral $\int H.\omega$ has no additive monodromy, and thus is hyperexponential on $\mathcal{F}_h$. So $H^{-1}\int H\omega$ is rational on $\mathcal{F}_h$. We can thus write
$$H^{-1} \int H\omega dx =J(h,x)$$
where $J$ is rational in $x$, and so replacing $h=F(x)$, we obtain relation \eqref{eq4}.

Let us now prove point $2$ of the Theorem. Let us now consider $D_1,\dots,D_{n-1}$, $n-1$ independent rational derivations tangential to $\mathcal{F}_h$ (and thus $D_i(F(x))=0$). We have
$$D_i(H(x))J(F(x),x)+H(x)D_i(J)(F(x),x)=H(x)\sum_{j=1}^n D_{i,j} \omega_j$$
for $i=1\dots n-1$ and then by restriction to $\mathcal{F}_h$
\begin{equation}\label{eq6}
\frac{D_i(H(x))}{H(x)}J(h,x)+D_i(J)(h,x)=\sum_{j=1}^n D_{i,j} \omega_j.
\end{equation}
This is a partial differential system on $\mathcal{F}_h$, and its solution space is an affine vector space of dimension $1$. Let us look at the subspace of rational solutions. The homogeneous solution of the equation is $C(h)H(x)^{-1}$. 

Let us first assume the restriction of $H$ to $\mathcal{F}_h$ is not rational, then equation \eqref{eq6} has at most one rational solution (the homogeneous equation having no rational solution), and as it has at least one due to the previous proof of point $1$ of the Theorem \ref{thm1}, it has exactly one. Now this system can be seen as an integrable connection with coefficients in $\mathbb{K}(h)$, and thus its solution has coefficients in the same field \cite{barkatou2012computing}. Thus $J(h,x)\in\mathbb{K}(h,x)$, and as $\int H\omega=H(x)J(F(x),x)$, the form $H\omega$ is exact.

Let us now assume $H$ is rational on $\mathcal{F}_h$. The space of rational solutions are of the form
$$C(h)H(x)^{-1}+R(h,x),$$
with $R$ a rational function in all its variables, and thus
$$\int H\omega dx=C(F(x))+H(x)R(F(x),x)$$
for some unknown function $C$. By differentiating both sides, we deduce that $C'(F(x))$ should be hyperexponential and that $C'(F(x))/H(x)=T(x)\in\overline{\mathbb{K}}(x)$. As $F$ is indecomposable, the function $C'(z)$ is then hyperexponential. Thus there exists $g\in\overline{\mathbb{K}}(z)$ such that
$$C(z)=\int e^{\int g(z) dz} dz,\quad H(x)=T(x)C'(F(x))$$
\end{proof}

\section{The algorithms}

In this section, we will present explicit algorithms to compute the decomposition of Theorem \ref{thm1} and cohomology basis of Theorem \ref{thm2}.

\subsection{Hyperexponential Decomposition}

\begin{prop}
Consider a transcendental hyperexponential function $H$ with $dH/H\in\mathbb{K}(x)_n$. We write
\begin{equation}\label{eq8}
H(x)=e^{F_0(x)}A(x)^{1/q}F_1(x)^{\lambda_1}\dots F_p(x)^{\lambda_p}
\end{equation}
where $q\in\mathbb{N}^*$, $\lambda_i\in\overline{\mathbb{K}}$ traceless independent on $\mathbb{Q}$, $A,F_i\in \overline{\mathbb{K}}(x)$. If there exists rational functions $F,T\in\overline{\mathbb{K}}(x)$ and $g\in\overline{\mathbb{K}}(z)$ such that
$$H(x)=L(F(x),x),\;\; L(z,x)= T(x)\exp{\int g(z) dz}$$
then $F$ can be chosen to be indecomposable, all the $F_i$ with $i\neq 1$ are rational functions of $F$ and $F,T,g$ have coefficients in $\mathbb{K}$. The algorithm \underline{\sf HyperexponentialDecomposition} computes such a $F,T,g$ if one exists.
\end{prop}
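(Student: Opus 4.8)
The plan is to read off the structure by comparing logarithmic derivatives, and then to make every choice explicit. Starting from an assumed decomposition $H=T\exp\!\big(\int^{F}g\big)$, taking $d\log$ gives $dH/H=dT/T+g(F)\,dF$, which I would match against the representation $dH/H=dF_0+\tfrac1q\,dA/A+\sum_i\lambda_i\,dF_i/F_i$ coming from \eqref{eq8}. First I reduce to $F$ indecomposable: if $F=u(\widetilde F)$ with $\widetilde F$ the indecomposable decomposition of $F$ (which lies in $\mathbb K(x)$ and is unique up to homography by \cite{cheze2014decomposition}), then $\int^{F}g(z)\,dz=\int^{\widetilde F}(g\circ u)\,u'\,dw$, so replacing $(F,g)$ by $(\widetilde F,(g\circ u)u')$ preserves the shape of the decomposition while making $F$ indecomposable. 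This also fixes the homographic ambiguity announced in the statement.

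The heart of the argument is the residue/fibre analysis. Since $g(F)\,dF=F^{*}(g\,dz)$ is a pullback, its polar locus with nonzero residue lies inside the fibres $F^{-1}(a_k)$ over the poles $a_k$ of $g$, the residue along every component of such a fibre being $\mathrm{res}_{a_k}g=\mu_k$; in contrast $dT/T$ contributes only integer residues. Comparing residues prime-divisor by prime-divisor and using that the $\lambda_i$ are $\mathbb Q$-independent (and traceless), the irrational residues force each zero or pole of $F_i$ to be a component of a fibre of $F$. Equivalently, on a generic fibre $\mathcal F_h=\{F=h\}$---irreducible because $F$ is indecomposable, by finiteness of the spectrum \cite{buse2011total}---the assumed decomposition makes $H|_{\mathcal F_h}=T|_{\mathcal F_h}\exp\!\big(\int^{h}g\big)$ rational; as $\prod_iF_i^{\lambda_i}|_{\mathcal F_h}$ must then be single valued with $\mathbb Q$-independent exponents, each $F_i$ is constant on generic fibres. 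Thus the divisor of every $F_i$ is supported on fibres of $F$. A rational function with fibre-supported divisor is itself a rational function of $F$ as soon as its divisor is balanced; the only obstruction is an ``unbalanced'' fibre, i.e. one point of $\mathbb P^1$ carrying a net nonzero $F$-degree, and by the homographic freedom in $F$ I would move that point to infinity and collect it into a single factor, relabelled $F_1$ (its base being $\mathrm{den}(F)$). Hence every $F_i$ with $i\neq1$ is a rational function of $F$.

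For the field of coefficients I would argue by Galois descent. Because $dH/H\in\mathbb K(x)_n$, the set of fibres carrying residues, together with the residues themselves, is stable under $\mathrm{Gal}(\overline{\mathbb K}/\mathbb K)$; the indecomposable $F$ then descends to $\mathbb K(x)$ (again \cite{cheze2014decomposition}), the Galois action merely permutes the poles $a_k$ along with their residues $\mu_k$, so $g(z)=\sum_k\mu_k/(z-a_k)+(\text{rational part matched on }F)$ is Galois invariant and lies in $\mathbb K(z)$, and $T=H\exp\!\big(-\!\int^{F}g\big)$ is then forced into $\mathbb K(x)$. The rational part of $g$ and the exponential piece $F_0$ are recovered by matching the exact/polynomial components of $dH/H$ pulled back through $F$.

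Algorithmically, \underline{\sf HyperexponentialDecomposition} would first call \underline{\sf RationalIntegration} to produce \eqref{eq8}, compute the indecomposable decomposition $F$ of one nonconstant $F_s$, test whether the remaining $F_i$ are rational functions of $F$ (returning that no decomposition exists if some $F_i$ other than the designated exceptional one fails this test), read off the poles $a_k$ and residues $\mu_k=\lambda_{i(k)}$ to assemble $g$, and recover $T$ by division; correctness is exactly the structural statement just proved. I expect the principal obstacle to be the rigorous passage ``divisor supported on fibres $\Rightarrow$ rational function of $F$ up to one unbalanced factor'', which rests on irreducibility of the generic fibre of an indecomposable $F$ and on isolating the single factor at infinity, together with making the Galois descent of $F,g,T$ fully rigorous when the original $F_i$ live only over $\mathbb L$; matching the rational $1/q$-part and the exponential part $F_0$ consistently is a secondary but delicate bookkeeping step.
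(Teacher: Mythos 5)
Your fibre-restriction argument for the structural part is essentially the paper's: restricting $H=T\exp\int^F g$ to a level set $\mathcal{F}_h$ forces it to be rational there, the $\mathbb{Q}$-independence (plus tracelessness) of the $\lambda_i$ forces each $F_i$ to be constant on generic fibres, hence algebraic over $\overline{\mathbb{K}}(F)$, and indecomposability of $F$ together with \cite{cheze2014decomposition} upgrades this to rational in $F$. (Your detour through ``divisor supported on fibres'' and balancedness is unnecessary: constancy on generic fibres is strictly stronger than fibre-supported divisor and already yields the algebraicity directly.) The genuine gap is the descent of $F$ to $\mathbb{K}(x)$. Chèze's theorem gives $\mathbb{K}$-rationality of the decomposition of a function that \emph{already lies in} $\mathbb{K}(x)$, but the $F_i$ produced by \underline{\sf RationalIntegration} live only in $\mathbb{L}(x)$, so a direct application yields only $F\in\mathbb{L}(x)$. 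Your assertion that Galois stability of the residue data makes ``the indecomposable $F$ descend'' begs the question: since $F$ is canonical only up to homography, applying $\sigma\in \mathrm{Gal}(\mathbb{L}:\mathbb{K})$ gives $\sigma(F)=h_\sigma\circ F$ for a cocycle of homographies, and this cocycle must be trivialized before $g$ can be called Galois invariant --- the poles $a_k$ and residues $\mu_k$ are permuted only \emph{up to} that homographic twist, so your formula $g(z)=\sum_k \mu_k/(z-a_k)+\dots$ is not yet shown to be fixed by the Galois action. The paper resolves this with a concrete construction (step 2 of \underline{\sf HyperexponentialDecomposition}): either $F_0$ is nonconstant and its decomposition is in $\mathbb{K}(x)$ outright, or one forms the power sums \eqref{eq10}, $\sum_{\sigma}\sigma(F_i)^m\in\mathbb{K}(x)$, shows each $\sigma(F_i)$ is a function of $F$ (a Galois conjugate of the whole representation is again a valid representation, and uniqueness up to homography applies), picks a nonconstant sum, and takes \emph{its} decomposition, which then lies in $\mathbb{K}(x)$ by Chèze. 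Your proposal has no substitute for this step.

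A secondary gap concerns the order in which you produce $g$ and $T$. You assemble $g$ first from residues and set $T=H\exp\bigl(-\int^F g\bigr)$, asserting $T\in\mathbb{K}(x)$; for that you would have to prove that your assembled $g$ --- including its non-logarithmic part and the contribution of $F_0$ --- makes $dH/H-g(F)\,dF$ the logarithmic derivative of a rational function, which is exactly the ``delicate bookkeeping'' you defer rather than carry out. The paper reverses the quantifiers: it solves for $T$ first, as a rational solution of the integrable connection $D_i(T)/T=D_i(H)/H$ given by derivations tangential to $\mathcal{F}_h$, viewing $T$ in $\mathbb{K}(h)(x_1,\dots,x_{n-1})[x_n]/(Z)$ and invoking \cite{barkatou2012computing} to guarantee that rational solutions exist without extending the base field $\mathbb{K}(h)$ --- this is where the $\mathbb{K}$-rationality of $T$ (jointly in $x$ and $h$) actually comes from; only afterwards is $g$ defined, uniquely, by $d(H/T)/(H/T)=g(F)\,dF$. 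As written, the correctness of your algorithm sketch rests on these two unproved steps.
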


The function $F(x)$ is obtained by decomposition of the rational functions $F_i$. The rational function $T$ is obtained by considering, as in proof of Theorem \ref{thm1}, derivations tangential to $F(x)=h$ and then building a PDE system for $T$. To do it explicitly, we introduce explicit expressions for the $D_i$. A simple way to build them is to consider $x_n$ as an algebraic function in $x_1,\dots,x_{n-1},h$ on the level $F=h$ and consider the derivations in $x_i,i=1\dots n-1$. We then find
$$D_i=\partial_i-\frac{1}{\partial_n Z}\left(\sum\limits_{j=1}^n \partial_j Z)\right) \partial_n$$
where $Z=\hbox{num}(F)-h\hbox{den}(F)\in\mathbb{K}[x,h]$. The polynomial $Z$ should not be constant with respect to $x_n$, but if it is we can always choose another variable: the polynomial $Z$ cannot be constant.\\

\noindent\underline{\sf HyperexponentialDecomposition}\\
\textsf{Input:} A closed $1$-form $dH/H\in\mathbb{K}(x)_n$ with $H$ not algebraic.\\
\textsf{Output:} If possible, rational functions $F,T$ and $g$ such that
$$H(x)=T(x)\exp{\int^{F(x)} g(z) dz}.$$
\begin{enumerate}
\item Apply algorithm \underline{\sf RationalIntegration} and obtain representation \eqref{eq8} with constant field $\mathbb{L}$.
\item If $F_0$ non constant, take for $F$ the decomposition of $F_0$. Else consider a non constant $F_i$, and find $m\in\{1,\dots,[\mathbb{L}:\mathbb{K}]\}$ such that
\begin{equation}\label{eq10}
\sum_{\sigma\in Gal(\mathbb{L}:\mathbb{K})} \sigma(F_i)^m \in\mathbb{K}(x)
\end{equation}
is not constant, and take for $F$ its decomposition.
\item Solve in $\mathbb{K}(h)(x_1,\dots,x_{n-1})$ the system
$$D_i(T)=\frac{D_i(H)}{H} T,\;\; i=1\dots n-1$$
and note $\tilde{T}\in \mathbb{K}(h)(x_1,\dots,x_{n-1})$ a solution, and $T(x)=\tilde{T}(F(x),x)$. If none exists, return ``None''.
\item Find $g\in\mathbb{K}(z)$ such that
$$\frac{d(H(x)/T(x))}{H(x)/T(x)}=g(F(x)) dF(x).$$
If such $g$ does not exist, return ``None'' else return $F,T,g$.
\end{enumerate}

\begin{proof}

We can compute the representation \eqref{eq8} in step $1$. If $H(x)=T(x)\exp{\int^{F(x)} g(z) dz}$ for some $F\in\overline{\mathbb{K}}(x)$, then its restriction on a level set $\mathcal{F}_h$ of $F$ should be rational. This implies that all the $F_i$ restricted to $\mathcal{F}_h$ should be constant. Thus all such $F_i$ are algebraic functions of a same $F$. The composition of a hyperexponential function with a rational function is hyperexponential. Thus $F$ can always be chosen indecomposable, up to changing $g$. Then according to \cite{cheze2014decomposition}, the $F_i$ are rational functions of $F$.

Let us go to step $2$. If $F_0\neq 0$, its decomposition $\tilde{F}_0\in\mathbb{K}(x)$ suits us, and thus we take $F=\tilde{F}_0$. Else consider the action of $\sigma \in Gal(\mathbb{L}:\mathbb{K})$ on the $\lambda_i,F_i$. As the traceless property is stable by Galois action, a conjugate of representation \eqref{eq8} is also a suitable representation. As $A,F_0\in\mathbb{K}(x)$, we have
$$\sum \lambda_i\ln F_i=\sum \sigma(\lambda_i)\ln \sigma(F_i).$$
The left-hand side is function of $F$, the right-hand side function of $\sigma(F)$, so by uniqueness, they are equal up to homography. So all the $\sigma(F_i)$ are function of $F$. Now considering the sums \eqref{eq10} for some non constant $F_i$, they are function of $F$. At least one of these sums is non constant as else $F_i$ would be constant. Thus $F$ can be obtained as the decomposition of such a sum, and so $F\in\mathbb{K}(x)$.

We now continue the algorithm to find $T,g$ and to prove that their coefficients are in $\mathbb{K}$. Restricted to the level $F(x)=h$, we want $H(x)=T(x) \exp{\int g(h) dh}$. As the derivations $D_i$ are tangential to $\mathcal{F}_h$, we obtain taking the logarithmic derivative both sides
$$\frac{D_i(T)}{T}=\frac{D_i(H)}{H}$$
We can see $T$ as an element of $\overline{\mathbb{K}(h)}(x_1,\dots,x_{n-1})[x_n]/(Z)$, as well as the right-hand side of the equation. Noting
$$T(h,x)=\sum\limits_{i=0}^{d-1} T_i(x_1,\dots,x_{n-1},h) x_n^i$$
with $d$ the degree in $x_n$ of $Z$, the system becomes an integrable connection on $T_0,\dots,T_{d-1}$ in $n-1$ variables $x_1,\dots,x_{n-1}$ over the base coefficient field $\mathbb{K}(h)$. Such system can be solved, and without extending the base coefficient field $\mathbb{K}(h)$ \cite{barkatou2012computing}. Thus if a rational solution in $x$ exists, then it can be chosen also rational in $h$. This gives $T$ in step $4$.

Now the $T$ obtained in step $3$ is valid up to multiplication by an arbitrary function of $F(x)$. However, as $T$ is rational and $F$ indecomposable, it is valid up to multiplication by a rational function of $F(x)$. Such factor can be taken care by the hyperexponential part, thus if a decomposition of $H$ exist, a decomposition with this $T$ will exist too. Step $4$ comes from the logarithmic differential of the relation $H(x)/T(x)$, giving
$$\frac{d(H(x)/T(x))}{H(x)/T(x)}= \frac{d(L(F(x),x)/T(x))}{L(F(x),x)/T(x)}= g(F(x)) dF(x)$$
which defines uniquely $g$.
\end{proof}

\subsection{Liouvillian Decomposition}

The function $F$ of Theorem \ref{thm1} will be obtained using the algorithm \underline{\sf HyperexponentialDecomposition}. If it does not return a solution, then $H\omega$ is exact and thus can be integrated with hyperexponential functions. Else it returns $(F,T,g)$, and equation \eqref{eq6} can be simplified in
\begin{equation}\label{eq7}
D_i(T(x)J(h,x))=T(x)\sum_{j=1}^n D_{i,j} \omega_j,\;\;i=1\dots n-1.
\end{equation}
Now the function $T(x)J(h,x)$ can be recovered up to the addition of an arbitrary function of $h$ by an indefinite integration on the level set $\mathcal{F}_h$. As the function $R(x)T(x)$ in equation \eqref{eq4} equals to $T(x)J(h,x)$ up to the addition of an arbitrary function of $h$, we then obtain candidates for $R$.\\

\noindent\underline{\sf LiouvillianDecomposition}\\
\textsf{Input:} A closed $1$-form $dH/H\in\mathbb{K}(x)_n$, and a $1$-form $\omega\in\mathbb{K}(x)_n$ such that $H\omega$ is closed.\\
\textsf{Output:} Rational functions $F,R,f,g$ satisfying \eqref{eq5b}.
\begin{enumerate}
\item Solve the system in $\mathbb{K}(x)$
$$\partial_i R=\omega_i-\frac{\partial_i H}{H} R,\quad i=1\dots n.$$
If it has a solution $R$, return $[0,R,0,0]$.
\item Apply \underline{\sf HyperexponentialDecomposition} to $H$, get $F,T,g$.
\item Solve the system \eqref{eq7} for $J(h,x)\in \mathbb{K}(h)(x_1,\dots,x_{n-1})$. Take one solution, and note it $\tilde{R}$ and
$$R(x)=\tilde{R}(F(x),x_1,\dots,x_{n-1}).$$
\item Find $f\in\mathbb{K}(z)$ such that
$$f(F)dF= T\omega -TdR-TR\frac{dH}{H}$$
\item Return $[F,R,f,g]$.
\end{enumerate}

\begin{prop}
The algorithm \underline{\sf LiouvillianDecomposition} takes in input $dH/H,\omega\in\mathbb{K}(x)_n$ and computes $F,R,f,g$ with coefficients in $\mathbb{K}$ satisfying equation \eqref{eq5b}.
\end{prop}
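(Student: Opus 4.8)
The plan is to treat the statement as a verification: I would check that each step of \underline{\sf LiouvillianDecomposition} reconstructs exactly the data whose existence is granted by Theorem~\ref{thm1}, while confirming that the coefficient field never grows past $\mathbb{K}$. First I would settle step $1$. If $H\omega$ is exact, then $\int H\omega=H(x)R(x)$ for some rational $R$, and differentiating gives $\omega_i=\partial_iR+R\,\partial_iH/H$, which is precisely the $\mathbb{K}(x)$-linear system solved there; conversely a rational solution $R$ means $d(HR)=H\omega$. Being $\mathbb{K}(x)$-linear in $R$, the system has its solution in $\mathbb{K}(x)$, and the output $[0,R,0,0]$ satisfies \eqref{eq5b} with $f=g=0$. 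Henceforth I assume $H\omega$ is not exact, so by Theorem~\ref{thm1} a genuine decomposition \eqref{eq5}, \eqref{eq5b} exists and in particular \eqref{eq5} holds, guaranteeing that step $2$ succeeds.

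Second I would justify steps $2$ and $3$. Step $2$ calls \underline{\sf HyperexponentialDecomposition}, which by the preceding Proposition returns $F,T,g$ with coefficients in $\mathbb{K}$ and $H=T\exp\int^{F}g$, i.e.\ exactly the objects of \eqref{eq5}. The pivotal remark is that a derivation $D_i$ tangential to $\mathcal{F}_h$ satisfies $D_i(F)=0$, hence $D_i(H)/H=D_i(T)/T$, so multiplying \eqref{eq6} by $T$ makes it collapse into $D_i(TJ)=T\sum_jD_{i,j}\omega_j$, which is \eqref{eq7}. This is an integrable connection for $TJ$ over $\mathbb{K}(h)$ along the curve $\mathcal{F}_h$; by \cite{barkatou2012computing} a rational solution, when it exists, is found without enlarging $\mathbb{K}(h)$, so step $3$ returns $\tilde R\in\mathbb{K}(h)(x_1,\dots,x_{n-1})$ and thus $R(x)=\tilde R(F(x),x_1,\dots,x_{n-1})\in\mathbb{K}(x)$. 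I would then observe that \eqref{eq7} fixes $TJ$ only up to its homogeneous solution $\hat C(h)$, so $R$ is determined up to $\hat C(F)/T$; replacing $R$ by $R+\hat C(F)/T$ alters $HR$ only by the $F$-dependent term $\hat C(F)\exp\int^{F}g$, which is reabsorbed into the univariate primitive. Hence any particular solution is admissible and merely selects a different, still rational, $f$.

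The core is step $4$: I would show that $\eta=T\omega-T\,dR-TR\,dH/H$ equals $f(F)\,dF$ with $f\in\mathbb{K}(z)$. Contracting $\eta$ with a tangential $D_i$ and again using $D_i(H)/H=D_i(T)/T$ gives $\eta(D_i)=T\sum_jD_{i,j}\omega_j-\bigl(T\,D_iR+R\,D_iT\bigr)=T\sum_jD_{i,j}\omega_j-D_i(TR)$, which vanishes by the defining relation of $R$ from step $3$. Thus $\eta$ annihilates the tangent spaces of the level sets of $F$ and is proportional to $dF$. To see that the factor is a rational function of $F$ alone, I would differentiate the decomposition $\int H\omega=C(F)+HR$ from Theorem~\ref{thm1}, adjusted (by the homogeneous freedom above) to the very $R$ produced in step $3$: this gives $H\omega-R\,dH-H\,dR=C'(F)\,dF$, and dividing by $H$ and multiplying by $T$ turns the right-hand side into $f(F)\,dF$, where $f$ is the rational function with $C'=f\exp\int g$. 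Matching this with $\eta$ identifies the $f$ extracted in step $4$ as rational over $\mathbb{K}$, and $F,R,f,g$ then assemble into \eqref{eq5b}.

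I expect the main obstacle to lie in step $3$: one must argue that the connection \eqref{eq7} admits a solution that is rational simultaneously in $x$ and in the level parameter $h$, and that it can be taken over $\mathbb{K}(h)$ with no field extension, which is exactly the ingredient drawn from \cite{barkatou2012computing}. The attendant subtlety, namely that the homogeneous ambiguity in $R$ does not damage \eqref{eq5b} but only shifts $f$, must be controlled so that \emph{any} solution returned by the solver yields, through step $4$, a correct decomposition with all coefficients in $\mathbb{K}$.
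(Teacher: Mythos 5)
Your proposal is correct and follows essentially the same route as the paper: test exactness in step $1$, invoke \underline{\sf HyperexponentialDecomposition} (which cannot fail since $H\omega$ is not exact), solve the connection \eqref{eq7} over $\mathbb{K}(h)$ via \cite{barkatou2012computing}, and obtain the step-$4$ relation by differentiating the decomposition of Theorem \ref{thm1}. Your extra verifications --- that $\eta$ annihilates the tangential derivations and that the homogeneous ambiguity $\hat C(F)/T$ in $R$ is absorbed into a rational shift of $f$ --- are just a more explicit rendering of the paper's remark that $T\tilde R$ determines $TJ$ up to a function of $h$ and that $\nu$ has the required form by point $2$ of Theorem \ref{thm1}.
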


\begin{proof}
Step $1$ tests if the form is exact. If it is, it returns $f=0,g=0,F=0$ and $R$. This satisfies equation \eqref{eq5b}. In step $2$, the algorithm \underline{\sf HyperexponentialDecomposition} cannot return ``None'' as then $H\omega$ would be exact.

In step $3$, we solve the system. As $H\omega$ is not exact, then the system has a rational solution $\tilde{R}$, and so $T(x)\tilde{R}(h,x)$ gives an expression of $T(x)J(h,x)$ up to the addition of an arbitrary function of $h$. Thus we have
$$ H(x)^{-1}\int H\omega = J(F(x),x)=R(x)+\frac{\nu(F(x))}{T(x)}.$$
As we already know that $\nu$ can be written under the form
$$\nu(z)=e^{-\int g(z) dz} \int f(z)e^{ \int g(z) dz} dz$$
thanks to point $2$ of Theorem \ref{thm1}, this implies that
$$\int H\omega=R(x)H(x)+ \int^{F(x)}\!\! f(z)e^{ \int g(z) dz} dz.$$
We differentiate the relation, giving
$$H\omega= HdR+RH\frac{dH}{H}+ f(F)e^{ \int^F g(z) dz}  dF$$
$$T\omega =TdR+TR\frac{dH}{H}+f(F)dF$$
and thus $f(F)dF= T\omega -TdR-TR\frac{dH}{H}$.
This is the relation solved in step $4$.
\end{proof}

\subsection{Cohomology algorithm}

We now consider a square free polynomial $S\in\mathbb{K}[x]$, a transcendental hyperexponential function $H$, and $D$ denominator of $dH/H$. We know that the pullback function $F$ depends only on $H$, so for any closed $1$-form with coefficients in $H\mathbb{K}[x,1/(SD)]$, they will have the same $F$. Moreover, the hyperexponential part of the decomposition of Theorem \ref{thm1} corresponds to the integral of an exact form, and thus is irrelevant for the computation of the cohomology. The only part left to control is $f$. Thus it is necessary to understand the cohomology of one variable hyperexponential $1$-forms, which is done in \cite{bostan2013hermite}.

\begin{defi}
Consider $g\in\mathbb{K}(z)$ a rational function. The kernel $K\in \mathbb{K}(z)$ of $g$ is equal to $g$ minus all poles of order $1$ with integer residues. The function $g$ is said to be differentially reduced if $K=g$. The shell of $g$ is $\exp{ \int g-K dz}\in\mathbb{K}(z)$.
\end{defi}

\begin{prop}\label{prop3}
Consider $g=g_1/g_2\in\mathbb{K}(z)$ differentially reduced with $d_1=\deg g_1,\; d_2=\deg g_2,\; d_1-d_2 \leq -2$, and $Q \in \mathbb{K}[z]$ square free coprime with $g_2$. A basis of the cohomology of $\exp{ \int g(z) dz} \mathbb{K}[z,1/(Qg_2)]dz$ is given by
$$(z^i/Q)_{i=0\dots \deg Q-1},(z^i/g_2)_{i=0\dots d_2-1,\;i\neq d_1}$$
\end{prop}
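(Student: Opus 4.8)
The claim is that for a differentially reduced $g=g_1/g_2$ with $d_1-d_2\le -2$ and a square-free $Q$ coprime to $g_2$, the classes
$$(z^i/Q)_{i=0\dots\deg Q-1},\quad(z^i/g_2)_{i=0\dots d_2-1,\ i\neq d_1}$$
form a basis of the cohomology of $E\,\mathbb{K}[z,1/(Qg_2)]\,dz$, where I write $E=\exp\int g\,dz$. The plan is to recall that a $1$-form $E\,h\,dz$ with $h\in\mathbb{K}[z,1/(Qg_2)]$ is exact iff $h=R'+gR$ for some $R$ in the same ring, since $(ER)'=E(R'+gR)$. So cohomology is the quotient of $\mathbb{K}[z,1/(Qg_2)]$ by the image of the $\mathbb{K}$-linear operator $L(R)=R'+gR$, and I must show the listed monomials-over-denominators descend to a basis of this quotient.

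The strategy I would follow is generalized Hermite reduction (the tool from \cite{bostan2013hermite} the paper already flags). First I would handle the factor $Q$: since $Q$ is square-free and coprime to $g_2$, any pole of order $\ge 2$ coming from $Q$ can be reduced by the usual Hermite step, because $L$ acts on a term $c/Q^k$ (with $k\ge 2$) by lowering the pole order modulo lower-order data, the obstruction being controlled by $(Q^{k-1})'$ which is coprime to $Q$. This reduces every class to poles of order exactly $1$ along $Q$, giving the $\deg Q$ representatives $z^i/Q$, and the residue count shows none is exact (a genuine simple pole of $E\,h$ forces nonzero residue-type invariant). Next I would treat the poles along $g_2$: the hypothesis that $g$ is differentially reduced (its kernel equals $g$, i.e. no order-one poles with integer residues) is exactly what guarantees the reduction along $g_2$ cannot be fully cleared and leaves the stated residues $z^i/g_2$, $i=0\dots d_2-1$. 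Finally the polynomial part: the degree inequality $d_1-d_2\le -2$ governs how $L$ raises degrees, and I would show $L$ is surjective onto the polynomials (so no polynomial class survives) while precisely the single value $i=d_1$ among the $z^i/g_2$ becomes redundant, which is why it is excised from the list.

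Concretely I would argue by a filtration/normal-form computation: decompose $h$ by partial fractions relative to $Q$, $g_2$, and a polynomial part, and show (i) the $Q$-part reduces to $\mathrm{span}(z^i/Q)$ with the simple-pole classes independent, (ii) on the $g_2$-part the operator $L$ restricted to functions with poles along $g_2$ has cokernel spanned by $(z^i/g_2)_{i\neq d_1}$, the dropped index $d_1$ reflecting the leading interaction between $g_1/g_2$ and the reduction, and (iii) the polynomial part lies entirely in the image of $L$ because multiplication by $g$ lowers degree (by at least $2$) so $R'+gR$ can hit any target polynomial by solving triangularly in decreasing degree. Assembling (i)–(iii) gives both spanning and linear independence of the listed family in the quotient.

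The main obstacle I anticipate is step (ii): pinning down why exactly the index $i=d_1$ drops out and proving the $g_2$-localized operator has precisely the stated $(d_2-1)$-dimensional cokernel. This is the place where the differential-reducedness hypothesis and the exact shape $d_1-d_2\le -2$ both get used, and it requires a careful book-keeping of how $L$ mixes the $1/g_2$ pole data with the leading term of $g_1$; the degree bound is what makes the relevant linear map on the $1/g_2$ coefficients upper-triangular with a single unavoidable rank drop at $i=d_1$. The $Q$-part and the polynomial part I expect to be routine Hermite reduction once the operator $L$ is set up, so I would concentrate the detailed computation on the $g_2$ sector.
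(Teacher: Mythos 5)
Your plan follows the same road as the paper: the paper's own proof of Proposition~\ref{prop3} is essentially a citation of Lemmas $6$, $16$ and $8$ of \cite{bostan2013hermite}, which perform exactly the reduction you describe --- Hermite reduction of the $Q$-poles down to $q/Q$ with $\deg q<\deg Q$, then reduction of the $g_2$-part into the finite-dimensional space $\mathcal{N}_K$, whose basis in the case $d_1-d_2\leq -2$ is $z^i$, $i=0\dots d_2-1$, $i\neq d_1$. Your setup (exactness $\Leftrightarrow$ $h\in\operatorname{im}L$ with $L(R)=R'+gR$, since $(ER)'=E(R'+gR)$ and $L$ strictly raises pole order away from the poles of $g$, so $R$ stays in the ring), your step (i), and the residue argument for independence of the $z^i/Q$ are a correct self-contained unpacking of those lemmas; step (ii), the $g_2$-sector where differential reducedness is needed to push pole orders along a possibly non-square-free $g_2$ down to $1$, is precisely the content of Lemma $8$ of \cite{bostan2013hermite}, and you rightly flag it as the delicate part.

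There is, however, one genuinely false intermediate claim: in step (iii), $L$ is \emph{not} surjective onto the polynomials, and polynomial classes do survive. For a polynomial $R$ the term $gR$ is not a polynomial (it carries the denominator $g_2$), so solving $R'+gR=P$ triangularly in decreasing degree cannot terminate at zero: each step leaves a proper-fraction remainder over $g_2$. A concrete counterexample is $g=1/z^2$ (differentially reduced, $d_1=0$, $d_2=2$), where $e^{-1/z}\,dz$ is not exact; indeed $1-L(z)=-1/z$, so the class of $1$ equals the nonzero basis class $-z/g_2$. If your (iii) held literally, the class of $1$ would vanish and force a linear relation among the listed $z^i/g_2$, contradicting the proposition itself. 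The correct statement --- which your own triangular mechanism actually proves once the remainders are tracked --- is that the polynomial part \emph{reduces into} the $g_2$ sector: writing everything over the denominator $g_2$, the numerator of $L(cz^j)$ for $j\geq 1$ is $cjz^{j-1}g_2+cz^jg_1$, of degree $j+d_2-1\geq d_2$ with nonzero leading coefficient $cj\,\mathrm{lc}(g_2)$ (here $d_1\leq d_2-2$ is used), so all numerators can be shaved to degree $\leq d_2-1$; and the excised index $i=d_1$ arises not from a rank drop inside the pole sector, as you locate it, but from the image of the constants, $L(c)=c\,g_1/g_2$, whose numerator has degree exactly $d_1$ and kills the $z^{d_1}$ coefficient. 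With (iii) corrected in this way the proof goes through and coincides with the paper's.
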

\begin{proof}
We first use Lemmas $6,16$ of \cite{bostan2013hermite}, reducing the rational part of the form to $q/Q+p/g_2$ with $\deg q<\deg Q$. Now using Lemma $8$ of \cite{bostan2013hermite}, we can now reduce $p$ to a vector space $\mathcal{N}_K$. When $d_1-d_2 \leq -2$ (case $3$), the basis of $\mathcal{N}_K$ is $z^i,\; i=0\dots d_2-1,\;i\neq d_1$, giving Proposition \ref{prop3}.
\end{proof}

The condition $d_1-d_2\leq -2$ is equivalent to ask that the form $g(z)dz$ has not a pole at infinity. The function $g$ used will be the output of \underline{\sf HyperexponentialDecomposition}, and as $F$ is unique only up to homographic transformation, it is always possible to make a homographic transformation of $g(z)dz$ and $F$ to satisfy this condition. Now the function $g$ can always be assumed differentially reduced by multiplying $T$ by a suitable rational function of $F$.  Then applying \underline{\sf LiouvillianDecomposition}, we obtain the ``non integrable part'' $\int f(z)\exp{\int g(z) dz}dz$. The poles of $f$ will lead after substitution by $F$ to poles of $\omega$, and as their location is controlled by $SD$, this will allow us to control the poles of $f$.\\

\noindent\underline{\sf CohomologyBasis}\\
\textsf{Input:} A closed form $dH/H\in\mathbb{K}(x)_n$ with $H$ transcendental, $S\in\mathbb{K}[x]$ square free coprime with $D$ denominator of $dH/H$.\\
\textsf{Output:} A list of forms $(\omega_i)_{i=1\dots r}$ such that $(H\omega_i)_{i=1\dots r}$ is a basis of the cohomology of $H\mathbb{K}[x,1/(SD)]_n$.\\
\begin{enumerate}
\item Apply \underline{\sf HyperexponentialDecomposition} to $dH/H$. If it returns ``None'', return $[\;\!]$. Else obtain $F,T,g$.
\item Find $h$ homography such that $h'(z)g(h(z))$ is of degree $\leq -2$ and $\hbox{den}(h^{-1}(F))$ does not divide a power of $SD$. Replace $g$ by $h'(z)g(h(z))$ and $F$ by $h^{-1}(F)$.
\item Compute the kernel $k$ and shell $s$ of $g$, and replace $g$ by $k$, and $T$ by $Ts(F)$.
\item For all simple roots $\alpha$ of $\hbox{den}(g)$ with rational residue, replace $g$ by $g+m/(z-\alpha)$ and $T$ by $T/(F-\alpha)^m$ with $m\in \mathbb{N}$ such that the residue of $g$ at $\alpha$ is positive.
\item Note $m$ the residue at infinity of $g$, take a root $\alpha$ of $\hbox{den}(g)$ of order $\geq 2$ or with irrational residue, and replace $g$ by $g-m/(z-\alpha)$ and $T$ by $T(F-\alpha)^m$.
\item Noting $g=g_1/g_2$, $d_1=\deg g_1,\; d_2=\deg g_2$, compute
$$\Sigma=\{c\in\bar{\mathbb{K}},\exists u,\; \hbox{num}(F-c) \mid (SD)^u\},\; Q(z)=\!\!\!\!\!\!\!\! \prod_{c\in\Sigma\setminus g_2^{-1}(0)} \!\!\!\!\!\!\!\! z-c$$
\item Return
$$\sum_{\sigma\in\hbox{Gal}(\mathbb{K}(\alpha):\mathbb{K})} \sigma\left(\frac{F^idF}{TQ(F)}\right),\quad i=0\dots \deg Q-2, $$
$$\sum_{\sigma\in\hbox{Gal}(\mathbb{K}(\alpha):\mathbb{K})} \sigma\left(\frac{F^idF}{Tg_2(F)}\right),\quad i=0\dots d_2-2, i\neq d_1,$$
$$\sum_{\sigma\in\hbox{Gal}(\mathbb{K}(\alpha):\mathbb{K})} \sigma\left(\frac{F^{\deg Q-1}dF}{TQ(F)}-\hbox{lc}(g_2)\frac{F^{d_2-1}dF}{Tg_2(F)}\right)$$
\end{enumerate}

\begin{proof}
In step $1$, if \underline{\sf HyperexponentialDecomposition} re\-tur\-ns ``None'' when applied to $H$, this implies that any closed $1$-form $H\omega,\; \omega\in\mathbb{K}(x)_n$ is exact. Thus the cohomology of $H\mathbb{K}[x,1/(SD)]_n$ in particular is trivial, and thus the returned basis is empty.
In step $2$, we want a homographic transformation to $g(z)dz$ so that it is not singular at $\infty$, which is equivalent to $h'(z)g(h(z))$ being of degree $\leq -2$. This constrains only $h$ of not sending a singular point of $g(z)dz$ to infinity. The denominators $\hbox{den}(h^{-1}(F))$ are linear combinations of numerator and denominators of $F$, and thus only finitely many such combinations can divide a power of $SD$. Thus it is always possible to find a suitable $h$. Now replacing $F$ by $h^{-1}(F)$, we ensure that $F,T,g$ is a decomposition of $H$. 
In step $3$, we differentially reduce $g$ and change $T$ accordingly so that $F,T,g$ is now a decomposition of $H$ with $g$ differentially reduced.  The differential reduction may have changed the residue at infinity to an integer $m$. In step $4$, we shift rational residues by integers and modify accordingly $T$, and this does not create integer residues, so $g$ is still differentially reduced. In step $5$, we modify the residue of a pole of $g$ (keeping it differentially reduced) such that the residue at infinity of $g$ is zero (so $\deg g\leq -2$), and change $T$ accordingly so that $F,T,g$ is still a decomposition of $H$. As this pole is of order $\geq 2$ or with irrational residue, rational residues of $g$ are still positive.

We know that any closed form $H\omega$ in $H\mathbb{K}[x,1/(SD)]_n$ can be reduced modulo exact forms to $f(F)\exp {\int^F g(z) dz} dF$ by Theorem \ref{thm1}. We can now apply Proposition \ref{prop3}. The denominators of $f$ can be reduced to $g_2$ and simple poles $\notin g_2^{-1}(0)$. Now considering such a pole $c$ of $f$, we know that $\exp \int g(z) dz$ is smooth at $z=c$ and is of order $1$ to $f$. Thus thus integral $\int f(F)\exp {\int^F g(z) dz} dF$ has a logarithmic singularity along the curve $F=c$. This property is conserved by adding terms in $H\mathbb{K}(x)_n$, and thus so is $\int H\omega$. This implies that $H\omega$ has a pole along $F=c$, and thus $\hbox{num}(F-c)$ divides a power of $SD$. All such possible $c$ are computed in step $6$, and as the $\hbox{num}(F-c)$ are coprime for different $c$, the set $\Sigma$ is finite. Thus we can define the polynomial $Q\in\mathbb{K}[z]$ as the sets $\Sigma,g_2^{-1}(0)$ are invariant by Galois action.

We can now apply Proposition \ref{prop3} with $g,Q$ obtained in steps $5,6$. We obtain a vector space of possible $f$. However, some elements of this vector space have a non zero residue at infinity. A $H\omega\in H\mathbb{K}[x,1/(SD)]_n$ cannot have a singularity along $\hbox{den}(F)=0$ as $\hbox{den}(F)$ does not divide a power of $SD$. Thus its integral cannot have a logarithmic singularity along $\hbox{den}(F)=0$, and so $f(z)\exp \int g(z) dz$ cannot have a logarithmic singularity at $z=\infty$. This ensures that the possible $f$ will not have residues at infinity, and so removes after basis change a single basis element of the cohomology. As all other elements have degree $\leq -2$, the $F$ substitution and differentiation does not make appear $\hbox{den}(F)$ in the denominators. 

Consider $\mathcal{C}$ a curve on which $T=0$. On $\mathcal{C}$, the function $\exp{\int^F g(z) dz}$ can be smooth, essential singular, irrational ramified or rational ramified with positive exponent. In all cases, $H=T\exp{\int^F g(z) dz}$ still vanishes or is singular on $\mathcal{C}$, and thus $dH/H$ is singular on $\mathcal{C}$. Thus $\hbox{num}(T)$ divides $D$, and so
$$\frac{F^idF}{TQ(F)},\frac{F^idF}{Tg_2(F)},\frac{F^{\deg Q-1}dF}{TQ(F)}-\hbox{lc}(g_2)\frac{F^{d_2-1}dF}{Tg_2(F)}$$
of step $7$ are in $\mathbb{K}(\alpha)[x,1/(SD)]_n$.

If for any $\omega\in \mathbb{K}(\alpha)[x,1/(SD)]_n$ with $H\omega$ closed, we can write $\omega=\omega_e+\omega_c$ with $H\omega_e$ exact and $\omega_c$ in a vector space $\hbox{Span}(v_1,\dots,v_r) \subset \mathbb{K}(\alpha)[x,1/(SD)]_n$, taking the sum over the Galois conjugates of $\alpha$ and dividing by the degree of the extension gives us a similar decomposition with $\omega_c$ in $\hbox{Span}(\sum_{\hbox{Gal}} v_1,\dots,\sum_{\hbox{Gal}} v_r) \subset \mathbb{K}[x,1/(SD)]_n$. Thus step $7$ returns a basis of the cohomology of $H\mathbb{K}[x,1/(SD)]_n$.
\end{proof}

\section{Applications and Examples}

\begin{proof}[of Corollary \ref{cor1}]
If system \eqref{eq3} admits a Liouvillian first integral, it can be written
$$I(x,y)=\int \mathcal{R} dx-\mathcal{R} Vdy$$
where $\mathcal{R}$ is a hyperexponential function, called the integrating factor. If $\mathcal{R}$ is algebraic, then $I$ is $k$-Darbouxian which is forbidden by hypothesis. If $\mathcal{R} dx-\mathcal{R} Vdy$ is exact, its integral is hyperexponential, and thus $I$ is the exponential of a Darbouxian first integral, again forbidden by hypothesis. Thus we can apply Theorem \ref{thm1} and write
$$I(x,y)= \int^{F(x,y)} \!\!\!\!\!\!\!\!\! f(z) e^{\int g(z) dz} dz+T(x,y)R(x,y)e^{\int^{F(x,y)} \!\!\!\! g(z) dz}$$
Let us note $X=F(x,y),\; Y=R(x,y)T(x,y)$. In these new variables, the first integral writes
$$I(X,Y)=\int f(X)e^{\int g(X) dX} dX+e^{\int g(X) dX} Y$$
Differentiating $I$ in $X,Y$, we obtain $-\frac{\partial Y}{\partial X}=f(X)+g(X)Y.\!\!\!$
%$$-\frac{\partial Y}{\partial X}= \frac{f(X)e^{\int g(X) dX}+g(X)e^{\int g(X) dX} Y }{e^{\int g(X) dX}}=f(X)+g(X)Y$$
%and the functions $a,b$ are given by $a=-f,b=-g$.
\end{proof}

The algorithms presented in this article are implemented for $\mathbb{K}=\mathbb{Q}$ (but without restrictions on $\mathbb{L}$) in Maple and are available on \url{http://combot.perso.math.cnrs.fr/}. All the timings are around $1$s. The dominant cost is the possible large degree of $F$ and field extension $\mathbb{L}$.\\

Exemple $1$: $dH/H=$\\
$$\frac{2x_1^3-12x_1^2x_2-3x_1^2+6x_2^2}{3x_1^2(x_1^2-2x_2^2)} dx_1+ \frac{4(3x_1-x_2)}{3(x_1^2-2x_2^2)}dx_2+\frac{1}{x_3}dx_3$$
\underline{\sf RationalIntegration} returns
$$\frac{1}{x_1}, \left[ \frac{1}{3}, x_3^3(x_1^2-2x_2^2)\right], \left[ \lambda_{2,2}, \frac{\lambda_{2,2}x_1+2x_2}{-\lambda_{2,2}x_1+2x_2} \right],$$
$$\mathbb{L}\simeq\mathbb{Q}[\lambda_{2,1},\lambda_{2,2}]/<-\lambda_{2,1}-\lambda_{2,2}, \lambda_{2,1}\lambda_{2,2}+2 >.$$

Exemple $2$:\\
$$\frac{dH}{H}=2\frac{7x_1-2}{x_1^2-2}dx_1-\frac{4a}{x_2^2-2}dx_2$$
For $a=4$, \underline{\sf HyperexponentialDecomposition} returns
$$H=(x_1^2-2)^7e^{\int^F -\frac{4}{2z^2-2} dz},$$
$$F=\frac{4x_1x_2^3+x_2^4+8x_1x_2+12x_2^2+4}{x_1x_2^4+12x_1x_2^2+8x_2^3+4x_1+16x_2}$$
Remark that for $a\in\mathbb{N}^*$, the degree of $F$ is $a$. This is due to the fact that the degree of $F$ is related to the height of integer relation between the residue, here depending on $a$.\\

Exemple $3$:\\
The differential system
$$\dot{x}_1=-3x_1^6x_2^2-9x_1^4x_2^4-9x_1^2x_2^6-3x_2^8+2x_1^6-2x_1^5x_2-$$
$$6x_1^4x_2^2-2x_1^2x_2^4-6x_1x_2^5-2x_2^6-2x_1^4+4x_1^3x_2+4x_1x_2^3+2x_2^4,$$
$$\dot{x}_2=3x_1^8+9x_1^6x_2^2+9x_1^4x_2^4+3x_1^2x_2^6+2x_1^6+6x_1^5x_2+$$
$$2x_1^4x_2^2+6x_1^2x_2^4+2x_1x_2^5-2x_2^6-2x_1^4-4x_1^3x_2-4x_1x_2^3+2x_2^4$$
admits a Liouvillian first integral $J$ with integrating factor $H$ given by $-dH/(2H)=$
$$\frac{3x_1^5+6x_1^3x_2^2+3x_1x_2^4-x_1^3-3x_1^2x_2-x_1x_2^2+x_2^3}{(x_1^2+x_2^2)^3}dx_1$$
$$+\frac{3x_1^4x_2+6x_1^2x_2^3+3x_2^5+x_1^3-x_1^2x_2-3x_1x_2^2-x_2^3}{(x_1^2+x_2^2)^3}dx_2$$
The algorithm \underline{\sf LiouvillianDecomposition} returns
$$H=-\frac{8}{(x_1+x_2)^3}e^{\int^F -\frac{3z^2-2}{z^3} dz},\;\; F=-\frac{x_1^2+x_2^2}{x_1+x_2}$$
$$J=\int^F -4(33z^4+22z^2-4)ze^{\int -\frac{3z^2-2}{z^3} dz} dz-$$
$$H\frac{9x_1^6-6x_1^5x_2+27x_1^4x_2^2-4x_1^3x_2^3+27x_1^2x_2^4-6x_1x_2^5+9x_2^6}{2(x_1^2+x_2^2)^{-3}(x_1+x_2)^3}$$
The differential system is thus rationally equivalent to
$$\frac{\partial Y}{\partial X}= 4(33X^4+22X^2-4) +\frac{3X^2-2}{X^3}Y$$

Exemple $4$:\\
We consider $H$ given by $dH/(2H)=(x_1+x_2)\times $
$$\left(\frac{x_1^2+2x_1x_2-x_2^2}{(x_1^2+x_2^2)^3}dx_1-\frac{x_1^2-2x_1x_2-x_2^2}{(x_1^2+x_2^2)^3}dx_2 \right)$$
and
$$S=(x_1^2+x_2^2+x_1+x_2)(x_1^2+x_2^2-x_1-x_2)(x_1+2x_2).$$
The algorithm \underline{\sf CohomologyBasis} returns
$$\frac{120(x_1^2+2x_1x_2-x_2^2)}{(x_1^2+x_2^2)^2-(x_1+x_2)^2}dx_1 -\frac{120(x_1^2-2x_1x_2-x_2^2)}{(x_1^2+x_2^2)^2-(x_1+x_2)^2}dx_2,$$
$$\frac{(x_1^2+2x_1x_2-x_2^2)(11x_1^2+11x_2^2+x_1+x_2)}{(x_1^2+x_2^2)^3}dx_1-$$
$$\frac{(11x_1^2+11x_2^2+x_1+x_2)(x_1^2-2x_1x_2-x_2^2)}{(x_1^2+x_2^2)^3},$$
$$\frac{(11x_1^2+11x_2^2+x_1+x_2)(x_1^2+2x_1x_2-x_2^2)}{(x_1^2+x_2^2+x_1+x_2)(x_1^2+x_2^2-x_1-x_2)(x_1^2+x_2^2)}dx_1-$$
$$\frac{(11x_1^2+11x_2^2+x_1+x_2)(x_1^2-2x_1x_2-x_2^2)}{(x_1^2+x_2^2+x_1+x_2)(x_1^2+x_2^2-x_1-x_2)(x_1^2+x_2^2)}dx_2.$$
As we see, $x_1+2x_2$ never appears in the poles of these forms. This is because such pole cannot have a non zero residue, and thus is always reducible modulo exact forms.

%Example $1$:
%$$\alpha x^7y'=(x^2+1)y^3+x^3(\alpha+2)y^2-(2\alpha-1)x^6y$$
%This system admits a Liouvillian first integral

%\balancecolumns
%\scriptsize
%\smallskip
\bibliographystyle{abbrv}

\begin{small}
\bibliography{hyperexponential}
\end{small}

\end{document}